\theoremstyle{definition}
\newtheorem{thm}{Theorem}[section]
\newtheorem{prop}[thm]{Proposition}
\newtheorem{lem}[thm]{Lemma}
\newtheorem{rem}[thm]{Remark}
\title{Global solvability and convergence of the Euler-Poincar\'{e} regularization of the two-dimensional Euler equations}
\author{Takeshi Gotoda \footnote{ Department of mathematics, Kyoto University, Kitashirakawa Oiwake-cho, Sakyo-ku, Kyoto, JAPAN  email : gotoda@math.kyoto-u.ac.jp } }
\date{}
\begin{document}


\maketitle

\begin{abstract}
We study the Euler-Poincar\'{e} equations that are the regularized Euler equations derived from the Euler-Poincar\'{e} framework. It is noteworthy to remark that the Euler-Poincar\'{e} equations are a generalization of two well-known regularizations, the vortex blob method and the Euler-$\alpha$ equations. We show the global existence of a unique weak solution for the two-dimensional (2D) Euler-Poincar\'{e} equations with the initial vorticity in the space of Radon measure. This is a remarkable feature of these equations since the existence of weak solutions with the Radon measure initial vorticity has not been established in general for the 2D Euler equations. We also show that weak solutions of the 2D Euler-Poincar\'{e} equations converge to those of the 2D Euler equations in the limit of the regularization parameter when the initial vorticity belongs to the space of integrable and bounded functions.

\end{abstract}

\section{Introduction}

The motion of inviscid incompressible 2D flows is described by the 2D Euler equations,
\begin{equation}
\partial_t \boldsymbol{u} + (\boldsymbol{u} \cdot \nabla) \boldsymbol{u} + \nabla p = 0, \qquad \operatorname{div} \boldsymbol{u} = 0, \qquad \boldsymbol{u}(\boldsymbol{x}, 0) = \boldsymbol{u}_0(\boldsymbol{x}),  \label{Euler}
\end{equation}
where $\boldsymbol{u}=\boldsymbol{u}(\boldsymbol{x}, t)$ is the fluid velocity field and $p=p(\boldsymbol{x}, t)$ is the scalar pressure. Taking the curl of (\ref{Euler}) and defining the vorticity field by $\omega = \operatorname{curl} \boldsymbol{u}$, we obtain the transport equation for $\omega$, 
\begin{equation}
\partial_t \omega + (\boldsymbol{u} \cdot \nabla) \omega = 0, \qquad \omega(\boldsymbol{x}, 0) = \omega_0(\boldsymbol{x}),  \label{vEuler}
\end{equation}
and $\omega_0 = \operatorname{curl} \boldsymbol{u}_0$. Then, the velocity $\boldsymbol{u}$ is recovered from the vorticity $\omega$ via the Biot-Savart law,
\begin{equation}
\boldsymbol{u}(\boldsymbol{x}) = \left( \boldsymbol{K} \ast \omega \right)(\boldsymbol{x}) = \int_{\mathbb{R}^2} \boldsymbol{K}(\boldsymbol{x} - \boldsymbol{y}) \omega(\boldsymbol{y}) d \boldsymbol{y}, \label{biot}
\end{equation}
in which $\boldsymbol{K}$ denotes a singular integral kernel defined by 
\begin{equation*}
\boldsymbol{K}(\boldsymbol{x}) = \nabla^\perp G(\boldsymbol{x}) = - \frac{1}{2 \pi} \frac{\boldsymbol{x}^\perp}{\left| \boldsymbol{x} \right|^2}, \qquad G(\boldsymbol{x}) = - \frac{1}{2 \pi} \log{|\boldsymbol{x}|},
\end{equation*}
with $\nabla^\perp = (\partial_{x_2}, - \partial_{x_1})$ and $\boldsymbol{x}^\perp = (x_2, - x_1)$. Here, the function $G$ is the fundamental solution to the 2D Laplacian. If a passive particle is initially placed at the position $\boldsymbol{x}$, its evolution is governed by the following equation,
\begin{equation}
\partial_t \boldsymbol{\eta}(\boldsymbol{x}, t) = \boldsymbol{u} \left( \boldsymbol{\eta}(\boldsymbol{x}, t), t \right), \qquad \boldsymbol{\eta}(\boldsymbol{x}, 0) = \boldsymbol{x}, \label{eta}
\end{equation}
where $\boldsymbol{\eta}(\boldsymbol{x}, t)$ is called the Lagrangian flow map describing the position of the passive point $\boldsymbol{x}$ at time $t$. The solution of (\ref{eta}) yields that of (\ref{vEuler}) with the initial vorticity $\omega_0$ since it follows that
\begin{equation}
\omega(\boldsymbol{x}, t) = \omega_0 \left( \boldsymbol{\eta}(\boldsymbol{x}, - t) \right). \label{v-def}
\end{equation}
Hence, finding a solution to (\ref{eta}) with (\ref{biot}) and (\ref{v-def}) is equivalent to solving the problem (\ref{vEuler}) with (\ref{biot}). Regarding the initial value problem of the 2D Euler equations, the global existence of a unique weak solution has been established for $\omega_0 \in L^1(\mathbb{R}^2) \cap L^\infty(\mathbb{R}^2)$ \cite{Marchioro, Yudovich}. We also have global weak solutions with $\omega_0 \in L^1(\mathbb{R}^2) \cap L^p(\mathbb{R}^2)$ and $1 < p < \infty$, though the uniqueness is still open \cite{Diperna}. Moreover, the existence theorem can be extended to the case that $\omega_0$ belongs to the space of finite Radon measure on $\mathbb{R}^2$, denoted by $\mathcal{M}(\mathbb{R}^2)$, with a distinguished sign and its induced velocity $\boldsymbol{u}_0 \in L_{loc}^2(\mathbb{R}^2)$ \cite{Delort, Evans, Majda}. Note that this class includes vortex sheets but not point vortices.

In order to consider the vortex dynamics modeled by singular vortices in the space of Radon measure, such as vortex sheets or point vortices, it is effective to analyze the evolution of them in the regularized Euler flow, since the global existence of a unique solution is guaranteed. Actually, the Euler-$\alpha$ equations, which is a dispersive regularization of the Euler equations, have a unique global weak solution for initial vorticity in $\mathcal{M}(\mathbb{R}^2)$ \cite{Oliver} and point vortex solutions of them are used to analyze a singular vortex dynamics in ideal incompressible flows \cite{G.}. Another important example of the regularization is the vortex blob method that is introduced to compute the evolution of vortex sheets \cite{Chorin, Krasny}. Although the Euler-$\alpha$ equations and the vortex blob method are originally derived in different backgrounds and are usually applied to the different problems, we can derive both of them based on the Euler-Poincar\'{e} framework \cite{Holm(c)}. In the present paper, we investigate the Euler-Poincar\'{e} equations that come through the Euler-Poincar\'{e} framework and a generalization of both the Euler-$\alpha$ equations and the vortex blob method. We prove the existence of a unique global weak solution with its vorticity in the space of Radon measure. Moreover, we show that weak solutions of the Euler-Poincar\'{e} equations converge to those of the Euler equations in the limit of the regularization parameter provided the initial vorticity is integrable and bounded.

This paper is organized as follows. Section~\ref{general} presents the general framework of a regularization of the Euler equations based on the theory of the Euler-Poincar\'{e} equations. Especially, we observe two examples of the regularization, the vortex blob method and the Euler-$\alpha$ equations, in the viewpoint of the Euler-Poincar\'{e} framework in Section~\ref{blob} and \ref{alpha} respectively. In Section~\ref{Main}, we state the main results consisting of the global solvability of the Euler-Poincar\'{e} equations in the space of Radon measure and the convergence of weak solutions to those in the Euler equations for the bounded initial vorticity. The proof of the solvability and the convergence are shown in Section~\ref{proof1} and \ref{proof2} respectively.

\section{The Euler-Poincar\'{e} regularization}
\subsection{General framework}
In this section, we derive a regularized 2D Euler equations based on the framework given in \cite{Foias, Holm(c)}. Let $\boldsymbol{v}$ be an incompressible velocity field that we call {\it singular velocity}. We define the {\it regularized  velocity} by 
\begin{equation}
\boldsymbol{u}_h(\boldsymbol{x}) = \left( h \ast \boldsymbol{v} \right) (\boldsymbol{x}) = \int_{\mathbb{R}^2} h\left( \boldsymbol{x} - \boldsymbol{y} \right) \boldsymbol{v}(\boldsymbol{y}) d\boldsymbol{y}, \label{Rvelo}
\end{equation}
where $h$ is a scalar valued function on $\mathbb{R}^2$. In this paper, we assume that $h$ is a integrable function and may have a singularity at the origin. Owing to the convolution with $h$, $\boldsymbol{u}$ is smoother than $\boldsymbol{v}$. Similarly, we define the {\it singular vorticity} $q$ and the {\it regularized vorticity} $\omega_h$ by
\begin{equation*}
q = \operatorname{curl} \boldsymbol{v}, \qquad \omega_h = \operatorname{curl} \boldsymbol{u}_h.
\end{equation*}
Note that it follows that $\operatorname{div} \boldsymbol{u} = 0$ and $\omega_h = h \ast q$ when the convolution commutes with the differential operator.

Now, we consider the Hamiltonian structure with the Hamiltonian,
\begin{equation*}
\mathscr{H} = \frac{1}{2} \int_{\mathbb{R}^2} \boldsymbol{v}(\boldsymbol{x}) \cdot \boldsymbol{u}_h(\boldsymbol{x}) d\boldsymbol{x},
\end{equation*}
and the Lagrangian flow map $\boldsymbol{\eta}_h$ induced by regularized velocity,
\begin{equation}
\partial_t \boldsymbol{\eta}_h(\boldsymbol{x}, t) = \boldsymbol{u}_h \left( \boldsymbol{\eta}_h(\boldsymbol{x}, t), t \right), \qquad  \boldsymbol{\eta}_h(\boldsymbol{x}, 0) = \boldsymbol{x}. \label{h-eta}
\end{equation}
Then, the Euler-Poincar\'{e} equations arise from an application of Hamilton's principle as follows.
\begin{equation}
\partial_t \boldsymbol{v} + (\boldsymbol{u}_h \cdot \nabla) \boldsymbol{v} - (\nabla \boldsymbol{v})^T \cdot \boldsymbol{u}_h - \nabla \Pi = 0, \label{REE}
\end{equation}
where $\Pi$ is a generalized pressure. Taking the curl of (\ref{REE}) with the divergence-free condition, we obtain the transport equation for the singular vorticity convected by the regularized velocity,
\begin{equation*}
\partial_t q + (\boldsymbol{u}_h \cdot \nabla) q = 0.
\end{equation*}
Note that the Biot-Savart law gives $\boldsymbol{u}_h = \boldsymbol{K} \ast \omega_h$. Then, owing to the relation $\omega_h = h \ast q$, we finally obtain the vorticity form of the Euler-Poincar\'{e} equations.
\begin{equation}
\partial_t q + (\boldsymbol{u}_h \cdot \nabla) q = 0, \qquad \boldsymbol{u}_h = \boldsymbol{K}_h \ast q, \qquad \boldsymbol{K}_h = \boldsymbol{K} \ast h. \label{RPDE}
\end{equation}
It is important to remark that the regularized kernel $\boldsymbol{K}_h$ satisfies 
\begin{equation}
\boldsymbol{K}_h = \nabla^\perp G_h, \qquad - \Delta G_h = h, \label{KGh}
\end{equation}
where $G_h$ is a solution for the Poisson equation and thus we have $G_h = G \ast h$. Considering the definition of the singular integral kernel, we can see that a feature of the regularized equations (\ref{RPDE}) comes from how to regularize the delta function by the smoothing function $h$.

\label{general}

\subsubsection{The vortex blob method}
The vortex blob method is one of the most common regularizations of the Euler flow. The method was introduced to compute the vortex sheets' evolutions numerically\cite{Anderson, Chorin, Krasny} and has been applied to a variety of fluid dynamics, for instance see \cite{Leonard}. In this method, the regularized integral kernel $\boldsymbol{K}_\delta$ is given by
\begin{equation*}
\boldsymbol{K}^\delta(\boldsymbol{x}) = - \frac{1}{2 \pi} \frac{\boldsymbol{x}^\perp}{|\boldsymbol{x}|^2 + \delta^2}. 
\end{equation*}
This vortex blob regularization can be derived from the Euler-Poincar\'{e} framework. Indeed, replacing the function $h$ in (\ref{Rvelo}) by
\begin{equation*}
h^\delta(\boldsymbol{x}) = \frac{1}{2 \pi \delta^2} \psi\left( \frac{|\boldsymbol{x}|}{\delta} \right), \qquad \psi(r) = \frac{2}{(r^2 + 1)^2},
\end{equation*} 
we find that $\boldsymbol{K}^\delta$ comes through the solution for the following Poisson equation.
\begin{equation*}
\boldsymbol{K}^\delta = \nabla^\perp G^\delta, \qquad - \Delta G^\delta = h^\delta.
\end{equation*}
It is easily confirmed that the solution $G^\delta$ is given by
\begin{equation*}
G^\delta(\boldsymbol{x}) = G^\delta_r(|\boldsymbol{x}|) = - \frac{1}{2 \pi} \log{\sqrt{|\boldsymbol{x}|^2 + \delta^2}},
\end{equation*}
and $\boldsymbol{K}^\delta$ is rewritten by
\begin{equation*}
\boldsymbol{K}^\delta(\boldsymbol{x}) = \boldsymbol{K}(\boldsymbol{x})\Psi\left( \frac{|\boldsymbol{x}|}{\delta} \right), \qquad \Psi(r) = - 2 \pi r \frac{dG^{\delta=1}_r}{dr}(r) = \frac{r^2}{r^2 + 1}.
\end{equation*}
Note that we have $\Psi'(r) = r \psi(r) $. Hence, both $G^\delta$ and $\boldsymbol{K}^\delta$ have no singularity, which is the essence of the regularization. A remarkable feature of the vortex blob regularization is that the smoothing function $h^\delta$ is bounded and decays algebraically. Regarding the relation with the Euler equations in the $\delta \rightarrow 0$ limit, it has been shown in \cite{Liu} that the vortex blob methods converge to weak solutions of the 2D Euler equations under the conditions that the vorticity is in $\mathcal{M}(\mathbb{R}^2)$ with a distinguished sign and its induced velocity is locally square integrable.

\label{blob}

\subsubsection{The Euler-$\alpha$ model}
The Euler-$\alpha$ model was first introduced in \cite{Holm(a), Holm(b)} as the higher dimensional Camassa-Holm equations. We can also derive it by applying Lagrangian averaging to the Euler equations \cite{Marsden}. A remarkable property of this model is that the regularized velocity $\boldsymbol{u}^\alpha$ is written by $\boldsymbol{u}^\alpha = (1 - \alpha^2 \Delta)^{-1} \boldsymbol{v}$, see \cite{Foias}, and thus the smoothing function $h^\alpha$ in (\ref{Rvelo}) is defined by a fundamental solution for the operator $1 - \alpha^2 \Delta$, namely 
\begin{equation*}
h^\alpha(\boldsymbol{x}) = \frac{1}{2\pi\alpha^2} K_0 \left( \frac{|\boldsymbol{x}|}{\alpha} \right),
\end{equation*}
in which $K_0$ denotes the modified Bessel function of the second kind. It is important to remark that $K_0$ has a singularity at the origin like $K_0(r) \sim -\log{r}$ as $r \rightarrow 0$ and decays exponentially. Since the singular velocity $\boldsymbol{v}$ is explicitly expressed by the regularized velocity $\boldsymbol{u}^\alpha$, we find the Euler-Poincar\'{e} equations for $\boldsymbol{u}^\alpha$ as follows.
\begin{equation*}
(1 - \alpha^2 \Delta) \partial_t \boldsymbol{u}^\alpha + \boldsymbol{u}^\alpha \cdot \nabla (1 - \alpha^2 \Delta) \boldsymbol{u}^\alpha + (\nabla \boldsymbol{u}^\alpha)^T \cdot (1 - \alpha^2 \Delta) \boldsymbol{u}^\alpha = \nabla \Pi.
\end{equation*}
This equations are known as the Euler-$\alpha$ equations. Regarding the evolution of the singular vorticity described by the equation corresponding to (\ref{RPDE}), there exist a unique global weak solution for the initial vorticity $q_0 \in \mathcal{M}(\mathbb{R}^2)$ in contrast to the 2D Euler equations \cite{Oliver}. Moreover, it is also shown that a weak solution for $q_0 \in L^\infty(\mathbb{R}^2)\cap L^1(\mathbb{R}^2)$ converges to that of the vorticity equations induced by the 2D Euler equations. In terms of the formulation in (\ref{KGh}), we have
\begin{equation*}
\boldsymbol{K}^\alpha = \nabla^\perp G^\alpha, \qquad - \Delta G^\alpha(\boldsymbol{x}) = h^\alpha(\boldsymbol{x}),
\end{equation*}
and the solution $G^\alpha$ for the Poisson equation is concretely given by
\begin{equation*}
G^\alpha(\boldsymbol{x}) = G^\alpha_r(|\boldsymbol{x}|) = - \frac{1}{2 \pi} \left[ \log{|\boldsymbol{x}|} + K_0\left( \frac{|\boldsymbol{x}|}{\alpha} \right) \right].
\end{equation*}
Hence, we obtain the regularized integral kernel $\boldsymbol{K}^\alpha$ as follows.
\begin{equation*}
\boldsymbol{K}^\alpha(\boldsymbol{x}) = - \frac{1}{2 \pi} \frac{\boldsymbol{x}^\perp}{|\boldsymbol{x}|^2} B_K \left( \frac{|\boldsymbol{x}|}{\alpha} \right) = \boldsymbol{K}(\boldsymbol{x}) B_K \left( \frac{|\boldsymbol{x}|}{\alpha} \right),
\end{equation*}
where $B_K(r) = 1 - r K_1(r)$ and $K_1$ is the first order modified Bessel function of the second kind. Similar to the vortex blob method, we have the relation,
\begin{equation*}
B_K(r) = - 2 \pi r \frac{dG^{\alpha=1}_r}{dr}, \qquad B_K'(r) = r K_0(r). 
\end{equation*}
Especially, it follows from the properties of the Bessel function that $G^\alpha$ is bounded and $\boldsymbol{K}^\alpha(0) = 0$, namely they have no singularity.

\label{alpha}

\subsection{Main theorems}
Our main results consist of two theorems, the existence of a unique global weak solution for (\ref{RPDE}) in the space of Radon measure and its convergence to the Euler equations with the integrable and bounded initial vorticity. Recall that $\mathcal{M}(\mathbb{R}^2)$ denotes the space of finite Radon measures on $\mathbb{R}^2$ with the norm,
\begin{equation*}
\| \mu \|_{\mathcal{M}} = \sup \left\{ \left. \int_{\mathbb{R}^2} f d \mu \ \right| \ f  \in C_0(\mathbb{R}^2), \ \| f \|_{L^\infty} \leq 1 \right\},
\end{equation*}
where $C_0(\mathbb{R}^2)$ is the space of continuous functions vanishing at infinity. For later use, we define functions as follows. 
\begin{equation*}
\chi_{\log}^{-} (\boldsymbol{x})  = \left\{
\begin{array}{cc}
\displaystyle{ \left( 1 - \log{|\boldsymbol{x}|} \right)^{-1} }  &, \ |\boldsymbol{x}| \leq 1 , \\
\displaystyle{ 0 }  &, \ |\boldsymbol{x}| > 1,
\end{array}
\right. \quad 
\chi_{\log}^{+}(\boldsymbol{x})  = \left\{
\begin{array}{cc}
\displaystyle{0}  &,  \  |\boldsymbol{x}| \leq 1, \\
\displaystyle{1 + \log{|\boldsymbol{x}|} } &, \ |\boldsymbol{x}| > 1 , 
\end{array}
\right. 
\end{equation*}
and
\begin{equation*}
\chi^{-}_{\alpha}(\boldsymbol{x})  = \left\{
\begin{array}{cc}
|\boldsymbol{x}|^\alpha  &,  \ |\boldsymbol{x}| \leq 1 , \\
0  &,  \ |\boldsymbol{x}| > 1,
\end{array}
\right. \qquad 
\chi^{+}_{\alpha}(\boldsymbol{x}) = \left\{
\begin{array}{cc}
0   &,  \ |\boldsymbol{x}| \leq 1, \\
|\boldsymbol{x}|^\alpha  &,  \ |\boldsymbol{x}| > 1 .
\end{array}
\right. 
\end{equation*}
We also set $\chi_{\alpha}(\boldsymbol{x}) = \chi_{\alpha}^{-}(\boldsymbol{x}) + \chi_{\alpha}^{+}(\boldsymbol{x}) =  |\boldsymbol{x}|^\alpha$ for $\boldsymbol{x}\in \mathbb{R}^2$. These functions are used to characterize singularities and decay rates of functions. Then, we have the following theorem.

\begin{thm}
Suppose that $h \in C^1(\mathbb{R}^2) \cap W^1_1(\mathbb{R}^2)$ satisfies $\chi_1^{+} h \in L^1(\mathbb{R}^2)$ and
\begin{equation}
\chi_{\log}^{-} h \in L^\infty(\mathbb{R}^2), \qquad  \chi_1 \nabla h \in L^\infty(\mathbb{R}^2). \label{h-asympt}
\end{equation}
Then, for any initial vorticity $q_0 \in \mathcal{M}(\mathbb{R}^2)$, there exists a unique global weak solution of (\ref{RPDE}) such that 
\begin{equation*}
\boldsymbol{\eta}_h \in C^1(\mathbb{R};\mathscr{G}), \quad \boldsymbol{u}_h \in C(\mathbb{R}; C (\mathbb{R}^2; \mathbb{R}^2)), \quad q \in C(\mathbb{R};\mathcal{M}(\mathbb{R}^2)),
\end{equation*}
where $\mathscr{G}$ denotes the group of all homeomorphism of $\mathbb{R}^2$ that preserves the Lebesgue measure. \label{well-posed}
\end{thm}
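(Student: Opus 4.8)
The plan is to convert the regularized system (\ref{RPDE}) into a single ordinary differential equation for the Lagrangian flow map, exploiting the fact that the convolution with $h$ turns the singular Biot--Savart kernel $\boldsymbol{K}$ into a bounded and continuous kernel $\boldsymbol{K}_h=\boldsymbol{K}\ast h$. Writing $\boldsymbol{u}_h=\boldsymbol{K}_h\ast q$ together with the transport relation $q(t)=\boldsymbol{\eta}_h(\cdot,t)_{\ast}q_0$ (the measure analogue of (\ref{v-def})) and $\partial_t\boldsymbol{\eta}_h=\boldsymbol{u}_h(\boldsymbol{\eta}_h)$, the whole problem collapses to the closed fixed-point equation
\begin{equation*}
\boldsymbol{\eta}_h(\boldsymbol{x}, t) = \boldsymbol{x} + \int_0^t \int_{\mathbb{R}^2} \boldsymbol{K}_h\left( \boldsymbol{\eta}_h(\boldsymbol{x}, s) - \boldsymbol{\eta}_h(\boldsymbol{y}, s) \right) dq_0(\boldsymbol{y}) \, ds .
\end{equation*}
A solution of this equation will, by construction, produce a weak solution of (\ref{RPDE}), since testing the transport equation against a smooth function and using the flow map reduces it to an identity.

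The preliminary step, which I expect to be the genuine obstacle, is to extract from the hypotheses (\ref{h-asympt}) the precise regularity of $\boldsymbol{K}_h$: I claim $\boldsymbol{K}_h$ is bounded, continuous, and \emph{log-Lipschitz}, i.e.\ has modulus of continuity $\varphi(r)\sim r(1-\log r)$ for small $r$. Boundedness and continuity I would obtain by splitting $\boldsymbol{K}_h(\boldsymbol{x})=\int\boldsymbol{K}(\boldsymbol{z})h(\boldsymbol{x}-\boldsymbol{z})\,d\boldsymbol{z}$ into a neighbourhood of the singularity $\boldsymbol{z}=0$ of $\boldsymbol{K}$, where $|\boldsymbol{K}(\boldsymbol{z})|\sim|\boldsymbol{z}|^{-1}$ is locally integrable and $h$ is controlled by $\chi_{\log}^{-}h\in L^\infty$; a neighbourhood of $\boldsymbol{z}=\boldsymbol{x}$, where the logarithmic singularity of $h$ is integrable against the bounded $\boldsymbol{K}$; and the far field, handled by $h\in L^1$ and $\chi_1^{+}h\in L^1$. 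For the modulus I would use $\nabla\boldsymbol{K}_h=\boldsymbol{K}\ast\nabla h$, bound $|\nabla h(\boldsymbol{x})|\le C|\boldsymbol{x}|^{-1}$ from $\chi_1\nabla h\in L^\infty$, and combine it with $\nabla h\in L^1$; a dyadic splitting then shows $|\nabla\boldsymbol{K}_h(\boldsymbol{x})|\lesssim 1+\bigl|\log|\boldsymbol{x}|\bigr|$, whose integral is exactly the log-Lipschitz modulus. As a consistency check, this matches the two examples of Section~\ref{general}: the vortex blob kernel is globally smooth, whereas the Euler-$\alpha$ kernel behaves like $|\boldsymbol{x}|\log|\boldsymbol{x}|$ near the origin and so is only log-Lipschitz, which is therefore the sharp regularity to expect in general.

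Granting this, the remainder follows the classical theory of transport by an Osgood velocity field. For any $q\in\mathcal{M}(\mathbb{R}^2)$ one has $\|\boldsymbol{u}_h\|_{L^\infty}\le\|\boldsymbol{K}_h\|_{L^\infty}\|q\|_{\mathcal{M}}$, and $\boldsymbol{u}_h$ inherits the log-Lipschitz modulus with constant proportional to $\|q\|_{\mathcal{M}}$; since the transport preserves total variation, $\|q(t)\|_{\mathcal{M}}=\|q_0\|_{\mathcal{M}}$ for all $t$, so these bounds are uniform in time and yield \emph{global} rather than merely local existence. I would construct $\boldsymbol{\eta}_h$ on an arbitrary interval by Picard iteration on the displayed equation: the iterates are uniformly bounded and, by propagating the log-Lipschitz bound, share a common (time-dependent Hölder) modulus of continuity in $\boldsymbol{x}$, so Arzel\`a--Ascoli with a diagonal argument over compact sets extracts a convergent subsequence whose limit solves the equation. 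Because $\boldsymbol{u}_h=\nabla^\perp G_h\ast q$ is divergence-free, the resulting map preserves Lebesgue measure and lies in $\mathscr{G}$. Uniqueness comes from Osgood's lemma applied to the difference of two solutions,
\begin{equation*}
\bigl\| \boldsymbol{\eta}^1_h(\cdot, t) - \boldsymbol{\eta}^2_h(\cdot, t) \bigr\|_{L^\infty} \le C \int_0^t \varphi\!\left( \bigl\| \boldsymbol{\eta}^1_h(\cdot, s) - \boldsymbol{\eta}^2_h(\cdot, s) \bigr\|_{L^\infty} \right) ds ,
\end{equation*}
once the Osgood condition $\int_0 \varphi(r)^{-1}\,dr=\infty$ is verified for $\varphi(r)=r(1-\log r)$.

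Finally, I would read off the stated regularity. The relation $\partial_t\boldsymbol{\eta}_h=\boldsymbol{u}_h(\boldsymbol{\eta}_h,t)$ with $\boldsymbol{u}_h$ bounded and continuous gives $\boldsymbol{\eta}_h\in C^1(\mathbb{R};\mathscr{G})$; the uniform continuity of $\boldsymbol{K}_h$ together with the time-continuity of the flow gives $\boldsymbol{u}_h\in C(\mathbb{R};C(\mathbb{R}^2;\mathbb{R}^2))$; and $q(t)=\boldsymbol{\eta}_h(\cdot,t)_{\ast}q_0$ depends continuously on $t$ in $\mathcal{M}(\mathbb{R}^2)$ in the weak-$\ast$ sense, the natural topology here, since norm continuity already fails for a single point mass. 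The principal difficulty is concentrated entirely in the kernel estimate: once $\boldsymbol{u}_h$ is known to be bounded and Osgood-continuous, the flow-map construction and its uniqueness are standard.
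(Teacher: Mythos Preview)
Your overall strategy matches the paper's: establish that $\boldsymbol{K}_h$ is bounded and quasi-Lipschitz, then iterate on the flow map and close with an Osgood-type estimate. Your route to the log-Lipschitz bound, via a pointwise estimate $|\nabla\boldsymbol{K}_h(\boldsymbol{x})|\lesssim 1+\bigl|\log|\boldsymbol{x}|\bigr|$ integrated along segments, is a legitimate alternative to the paper's direct three-region splitting of $\boldsymbol{K}_h(\boldsymbol{x})-\boldsymbol{K}_h(\boldsymbol{x}')$ into $|\boldsymbol{x}-\boldsymbol{y}|<2r$, $2r\le|\boldsymbol{x}-\boldsymbol{y}|<2$, and $|\boldsymbol{x}-\boldsymbol{y}|\ge 2$.

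The one genuine gap is in how you pass to the limit. Extracting a convergent \emph{subsequence} of Picard iterates by Arzel\`a--Ascoli does not, by itself, produce a fixed point: in the iteration the right-hand side at step $n$ involves $\boldsymbol{\eta}^{n-1}$, so convergence of $\boldsymbol{\eta}^{n_k}$ alone says nothing about $\boldsymbol{\eta}^{n_k-1}$, and you cannot pass to the limit in the fixed-point equation along the subsequence. The paper avoids compactness entirely and shows the full sequence is Cauchy: setting
\[
\rho^N(t)=\sup_{n\ge N}\sup_{\boldsymbol{x}\in\mathbb{R}^2}\bigl|\boldsymbol{\eta}^n(\boldsymbol{x},t)-\boldsymbol{\eta}^{n-1}(\boldsymbol{x},t)\bigr|,
\]
the quasi-Lipschitz bound on $\boldsymbol{K}_h$ yields $\rho^N(t)\le c\|q_0\|_{\mathcal{M}}\int_0^t\varphi\bigl(\rho^{N-1}(s)\bigr)\,ds$, and the Osgood comparison forces $\rho^N\to 0$ uniformly on bounded time intervals. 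This is precisely the estimate you already invoke for uniqueness, so you have every ingredient in hand; replace the Arzel\`a--Ascoli step by this Cauchy argument and the proof closes.
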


\begin{rem}
The assumption (\ref{h-asympt}) allows the smoothing function $h$ and $\nabla h$ to have singularities and behave 
\begin{equation*}
h(\boldsymbol{x}) \sim  \mathcal{O}\left( - \log |\boldsymbol{x}| \right), \qquad \nabla h(\boldsymbol{x}) \sim  \mathcal{O}\left( |\boldsymbol{x}|^{-1} \right), 
\end{equation*}
as $|\boldsymbol{x}| \rightarrow 0$. The decay rates of them are estimated by the conditions $\chi_1^{+} h$, $\nabla h \in L^1$.
\end{rem}

In order to consider the convergence of solutions of (\ref{RPDE}) to those in the Euler equations, we consider the parameterized function $h^\varepsilon$ defined by 
\begin{equation*}
h^\varepsilon(\boldsymbol{x}) = \frac{1}{2 \pi \varepsilon^2} h\left( \frac{\boldsymbol{y}}{\varepsilon} \right).
\end{equation*}
Moreover, we assume that $h$ is a radial function, namely it depends only on $r = |\boldsymbol{x}|$, and define $h_r(|\boldsymbol{x}|) = h(\boldsymbol{x})$ that is a function on $\mathbb{R}$. Then, we have the following parameterized solutions $(\boldsymbol{\eta}^\varepsilon, \boldsymbol{u}^\varepsilon, q)$ such that 
\begin{align*}
\partial_t \boldsymbol{\eta}^\varepsilon(\boldsymbol{x}, t) &= \boldsymbol{u}^\varepsilon \left( \boldsymbol{\eta}^\varepsilon(\boldsymbol{x}, t), t \right), \quad \boldsymbol{\eta}^\varepsilon (\boldsymbol{x}, 0) = \boldsymbol{x}, \\
&q(\boldsymbol{x}, t) = q_0 \left( \boldsymbol{\eta}^\varepsilon(\boldsymbol{x}, -t) \right), \\
\boldsymbol{u}^\varepsilon &=  \boldsymbol{K}^\varepsilon \ast q, \quad \boldsymbol{K}^\varepsilon = \boldsymbol{K} \ast h^\varepsilon,
\end{align*}
and the following theorem about the convergence to the Euler equations in the $\varepsilon \rightarrow 0$ limit holds.

\begin{thm}
In addition to the assumptions in Theorem~\ref{well-posed}, suppose that $h$ is radial and satisfies
\begin{equation*}
\int_0^\infty k h_r(k) dk = 1.
\end{equation*}
Let $q_0 = \omega_0 \in L^1(\mathbb{R}^2) \cap L^\infty(\mathbb{R}^2)$. Then, for any $T > 0$, there exists $C(T) > 0$ such that 
\begin{equation*}
\sup_{t\in[0,T]} \sup_{\boldsymbol{x}\in\mathbb{R}^2} \left| \boldsymbol{\eta}^\varepsilon(\boldsymbol{x}, t) - \boldsymbol{\eta}(\boldsymbol{x}, t) \right| \leq C(T) \varepsilon^{e^{-T}}.
\end{equation*}
\label{convergence}
\end{thm}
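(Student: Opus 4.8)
The plan is a Gronwall-type comparison of the two Lagrangian flows in the quantity
\[
d(t)=\sup_{\boldsymbol{x}\in\mathbb{R}^2}\left|\boldsymbol{\eta}^\varepsilon(\boldsymbol{x},t)-\boldsymbol{\eta}(\boldsymbol{x},t)\right|,
\]
where $\boldsymbol{\eta}$ is the measure-preserving Yudovich flow map of the Euler equations and $\boldsymbol{\eta}^\varepsilon$ is the flow map furnished by Theorem~\ref{well-posed}. The first step is to use that both maps lie in $\mathscr{G}$: changing variables $\boldsymbol{y}\mapsto\boldsymbol{\eta}^\varepsilon(\boldsymbol{y},t)$ (resp. $\boldsymbol{\eta}(\boldsymbol{y},t)$) in the Biot–Savart integrals rewrites both velocities as integrals against the \emph{fixed} density $\omega_0$, so that $\partial_t\boldsymbol{\eta}^\varepsilon-\partial_t\boldsymbol{\eta}$ becomes $\int[\boldsymbol{K}^\varepsilon(\boldsymbol{\eta}^\varepsilon_{\boldsymbol{x}}-\boldsymbol{\eta}^\varepsilon_{\boldsymbol{y}})-\boldsymbol{K}(\boldsymbol{\eta}_{\boldsymbol{x}}-\boldsymbol{\eta}_{\boldsymbol{y}})]\,\omega_0(\boldsymbol{y})\,d\boldsymbol{y}$, with the abbreviations $\boldsymbol{\eta}^\varepsilon_{\boldsymbol{x}}=\boldsymbol{\eta}^\varepsilon(\boldsymbol{x},t)$, etc. Inserting $\pm\boldsymbol{K}(\boldsymbol{\eta}^\varepsilon_{\boldsymbol{x}}-\boldsymbol{\eta}^\varepsilon_{\boldsymbol{y}})$ splits this into a \emph{consistency error} (the kernel difference $\boldsymbol{K}^\varepsilon-\boldsymbol{K}$ evaluated along the single flow $\boldsymbol{\eta}^\varepsilon$) and a \emph{stability error} (the fixed kernel $\boldsymbol{K}$ evaluated along the two flows). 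Keeping the integral against $\omega_0$ rather than against the pushed-forward vorticities is essential, since two $L^\infty$ densities transported by nearby maps are not $L^\infty$-close.

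For the consistency error I would exploit the radial structure. Since $h$ is radial with $\int_0^\infty k\,h_r(k)\,dk=1$, the induced kernel factorizes as $\boldsymbol{K}^\varepsilon(\boldsymbol{x})=\boldsymbol{K}(\boldsymbol{x})\,m(|\boldsymbol{x}|/\varepsilon)$, where $m(\rho)=\int_0^\rho k\,h_r(k)\,dk$ is the accumulated mass with $m(\infty)=1$; hence $(\boldsymbol{K}^\varepsilon-\boldsymbol{K})(\boldsymbol{x})=-\boldsymbol{K}(\boldsymbol{x})\,(1-m(|\boldsymbol{x}|/\varepsilon))$. Because $\boldsymbol{\eta}^\varepsilon$ preserves Lebesgue measure, the transported vorticity keeps the bounds $\|\omega_0\|_{L^1}$ and $\|\omega_0\|_{L^\infty}$, so the consistency term is dominated by $\|(\boldsymbol{K}^\varepsilon-\boldsymbol{K})\ast g\|_{L^\infty}$ with $\|g\|_{L^1\cap L^\infty}=\|\omega_0\|_{L^1\cap L^\infty}$. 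Using $|(\boldsymbol{K}^\varepsilon-\boldsymbol{K})(\boldsymbol{x})|=\tfrac{1}{2\pi|\boldsymbol{x}|}(1-m(|\boldsymbol{x}|/\varepsilon))$ and splitting the convolution at a suitable radius, the near field contributes $\|g\|_{L^\infty}\int_0^R(1-m(r/\varepsilon))\,dr$, which scales like $\varepsilon$ after the substitution $r=\varepsilon\rho$ once the relevant moment of $h$ is controlled by the decay hypotheses, while the far field is bounded by $\|g\|_{L^1}\,(1-m(R/\varepsilon))/(2\pi R)$ and is smaller. This gives a bound of order $\varepsilon$ for the consistency error.

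For the stability error I would invoke the classical log-Lipschitz estimate for the Biot–Savart law: for $g\in L^1(\mathbb{R}^2)\cap L^\infty(\mathbb{R}^2)$,
\[
\left|(\boldsymbol{K}\ast g)(\boldsymbol{a})-(\boldsymbol{K}\ast g)(\boldsymbol{b})\right|\leq C\,\|g\|_{L^1\cap L^\infty}\,\varphi\!\left(|\boldsymbol{a}-\boldsymbol{b}|\right),\qquad \varphi(r)=r\,(1-\log r),
\]
valid for $0<r\leq 1$ and obtained by splitting the singular integral at scale $|\boldsymbol{a}-\boldsymbol{b}|$, using $\|g\|_{L^\infty}$ near the singularity and $\|g\|_{L^1}$ together with the $|\boldsymbol{x}|^{-2}$ decay of $\nabla\boldsymbol{K}$ away from it. Applying this estimate, and the same splitting for the shift $|\boldsymbol{\eta}^\varepsilon_{\boldsymbol{y}}-\boldsymbol{\eta}_{\boldsymbol{y}}|\leq d(t)$ in the integration variable, to $\boldsymbol{K}(\boldsymbol{\eta}^\varepsilon_{\boldsymbol{x}}-\boldsymbol{\eta}^\varepsilon_{\boldsymbol{y}})-\boldsymbol{K}(\boldsymbol{\eta}_{\boldsymbol{x}}-\boldsymbol{\eta}_{\boldsymbol{y}})$, and using monotonicity and concavity of $\varphi$, bounds the stability term by $C\,\varphi(d(t))$. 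Taking the supremum over $\boldsymbol{x}$ then yields the integral inequality
\[
d(t)\leq C_1\,\varepsilon\,t+C_2\int_0^t\varphi\!\left(d(s)\right)ds,\qquad d(0)=0.
\]

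The final and most delicate step is to extract the rate from this Osgood-type inequality. Because the forcing is of order $\varepsilon$ and $d(0)=0$, a continuity/bootstrap argument keeps $d(t)$ in the regime $r\leq 1$ where the log-Lipschitz modulus applies, uniformly on $[0,T]$ once $\varepsilon$ is small. Comparing $d$ with the solution of $y'=C_2\varphi(y)$, $y(0)=C_1\varepsilon T$, and integrating $dy/[y(1-\log y)]=C_2\,dt$ through the substitution $w=1-\log y$ gives $1-\log y(t)=(1-\log y(0))\,e^{-C_2 t}$, whence $d(t)\lesssim \varepsilon^{\,e^{-C_2 t}}$ for small $\varepsilon$; since $\varepsilon<1$ this is increasing in $t$, so the supremum over $[0,T]$ is attained at $t=T$ and produces the claimed bound $C(T)\,\varepsilon^{e^{-T}}$. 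I expect the main obstacle to be precisely this last point: since the Euler velocity is only log-Lipschitz rather than Lipschitz, the estimate cannot be closed by ordinary Gronwall, and it is the nonlinear Osgood integration that converts the order-$\varepsilon$ consistency error into the double-exponentially degrading exponent $e^{-T}$. A secondary technical point is to verify, from the decay and singularity hypotheses on $h$, that the consistency error is genuinely of order $\varepsilon$, and to track the log-Lipschitz constant $C_2$ so that the exponent matches the stated $e^{-T}$.
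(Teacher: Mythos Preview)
Your plan is correct and matches the paper's proof essentially step for step: the paper also rewrites both velocities as integrals against $\omega_0$ via the measure-preserving change of variables, inserts $\pm\boldsymbol{K}(\boldsymbol{\eta}^\varepsilon_{\boldsymbol{x}}-\boldsymbol{\eta}^\varepsilon_{\boldsymbol{y}})$, bounds the consistency piece by $\|\omega_0\|_{L^\infty}\|\boldsymbol{K}^\varepsilon-\boldsymbol{K}\|_{L^1}=O(\varepsilon)$ (your factorization $\boldsymbol{K}^\varepsilon=\boldsymbol{K}\cdot m(|\cdot|/\varepsilon)$ with $m(\rho)=\int_0^\rho k\,h_r(k)\,dk$ is exactly the paper's $\mathcal{G}$, and your near-field integral $\int_0^\infty(1-m(\rho))\,d\rho=\tfrac{1}{2\pi}\|\chi_1 h\|_{L^1}$ is the content of their Proposition~\ref{K_con}), and then closes with the same Osgood computation. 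The only place the paper is more explicit than your outline is the stability term: it splits $\boldsymbol{K}(\boldsymbol{\eta}^\varepsilon_{\boldsymbol{x}}-\boldsymbol{\eta}^\varepsilon_{\boldsymbol{y}})-\boldsymbol{K}(\boldsymbol{\eta}_{\boldsymbol{x}}-\boldsymbol{\eta}_{\boldsymbol{y}})$ once more and invokes two distinct lemmas, the standard quasi-Lipschitz bound for the $\boldsymbol{x}$-shift and a separate estimate (Lemma~\ref{K-esti2}) for the $\boldsymbol{y}$-shift through the area-preserving map $\boldsymbol{\eta}^\varepsilon\circ\boldsymbol{\eta}^{-1}$, which is precisely what your phrase ``the same splitting for the shift in the integration variable'' is gesturing at.
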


\begin{rem}
We remark that both the smoothing functions $h^\delta$ and $h^\alpha$ satisfy the assumptions of Theorem~\ref{well-posed} and Theorem~\ref{convergence}. Especially, the Euler-$\alpha$ model is a critical regularization in terms of the singularity since we know $K_0(r) \sim - \log{r}$ as $r \rightarrow 0$ in the smoothing function $h^\alpha$. On the other hand, it is easily confirmed that $h^\delta$ has a slowest algebraic decay to satisfy the assumptions of the theorems.

\end{rem}

\label{Main}

\section{Properties of the integral kernel}
In order to show the main theorems, it is important to investigate the properties of the integral kernel $\boldsymbol{K}_h$ and $\boldsymbol{K}^\varepsilon$ in the regularized Bio-Savart law. Let us first define the function $\varphi(r)$ for $r \geq 0$ by
\begin{equation*}
\varphi(r) = \left\{
\begin{array}{cc}
r(1 - \log{r}) \ &, \quad r < 1 , \\
1 \ &, \quad r \geq 1.
\end{array}
\right.
\end{equation*}
We now show that $\boldsymbol{K}_h$ is well-defined and especially $\boldsymbol{K}_h$ vanishes at the origin. Here, the smoothing function $h$ may have a singularity only at the origin. We actually assume that $h \in L^\infty(\mathbb{R}^2 \backslash B_\delta)$ with sufficiently small $\delta > 0$, where $B_R = \{ \boldsymbol{y}\in \mathbb{R}^2 \ | \ |\boldsymbol{y}| < R \}$.

\begin{lem}
Suppose that $h \in L^1(\mathbb{R}^2) \cap L^\infty(\mathbb{R}^2 \backslash B_\delta)$, $\delta > 0$ and $\chi_\alpha^{-} h \in L^\infty(\mathbb{R}^2)$, $\alpha\in [0,1)$. Then, $\boldsymbol{K}_h$ is uniformly bounded, namely $\boldsymbol{K}_h \in L^\infty(\mathbb{R}^2)$. Moreover, if $h$ is radial, then $\boldsymbol{K}_h(0) = 0$.  \label{K_h-bdd}
\end{lem}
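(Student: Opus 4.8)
The plan is to estimate the convolution $\boldsymbol{K}_h(\boldsymbol{x}) = (\boldsymbol{K}\ast h)(\boldsymbol{x})$ directly through the pointwise bound
\begin{equation*}
|\boldsymbol{K}_h(\boldsymbol{x})| \le \int_{\mathbb{R}^2} |\boldsymbol{K}(\boldsymbol{x}-\boldsymbol{y})|\,|h(\boldsymbol{y})|\,d\boldsymbol{y} = \frac{1}{2\pi}\int_{\mathbb{R}^2} \frac{|h(\boldsymbol{y})|}{|\boldsymbol{x}-\boldsymbol{y}|}\,d\boldsymbol{y},
\end{equation*}
and to show the right-hand side is bounded uniformly in $\boldsymbol{x}$. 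First I would split $h = h_1 + h_2$, where $h_1 = h\,\mathbf{1}_{B_1}$ carries the singularity at the origin and $h_2 = h\,\mathbf{1}_{\mathbb{R}^2\setminus B_1}$. For the regular part $h_2$, the hypothesis $h\in L^\infty(\mathbb{R}^2\setminus B_\delta)$ (with $\delta<1$) together with $h\in L^1(\mathbb{R}^2)$ gives $h_2 \in L^1(\mathbb{R}^2)\cap L^\infty(\mathbb{R}^2)$. I would then split the kernel singularity by writing the integral over $\{|\boldsymbol{x}-\boldsymbol{y}|\le 1\}$ and $\{|\boldsymbol{x}-\boldsymbol{y}|>1\}$: on the first region $\int_{|\boldsymbol{z}|\le1}|\boldsymbol{z}|^{-1}d\boldsymbol{z} = 2\pi$ yields the bound $\|h_2\|_{L^\infty}\cdot 2\pi$, and on the second region $|\boldsymbol{x}-\boldsymbol{y}|^{-1}<1$ yields $\|h_2\|_{L^1}$. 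Thus the $h_2$ contribution is uniformly bounded.

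The crux is the singular part $h_1$, for which the assumption $\chi_\alpha^{-}h\in L^\infty(\mathbb{R}^2)$ supplies $|h_1(\boldsymbol{y})|\le C|\boldsymbol{y}|^{-\alpha}$ with $\alpha\in[0,1)$, so that I must control $\int_{B_1} |\boldsymbol{y}|^{-\alpha}|\boldsymbol{x}-\boldsymbol{y}|^{-1}\,d\boldsymbol{y}$ uniformly in $\boldsymbol{x}$, a convolution of two weak singularities. The clean device is the dichotomy $\mathbb{R}^2 = D_1\cup D_2$ with $D_1 = \{|\boldsymbol{y}|\ge|\boldsymbol{x}-\boldsymbol{y}|\}$ and $D_2 = \{|\boldsymbol{y}|<|\boldsymbol{x}-\boldsymbol{y}|\}$. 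On $D_1\cap B_1$ one has $|\boldsymbol{y}|^{-\alpha}\le|\boldsymbol{x}-\boldsymbol{y}|^{-\alpha}$, hence the integrand is bounded by $|\boldsymbol{x}-\boldsymbol{y}|^{-(1+\alpha)}$; after the substitution $\boldsymbol{z}=\boldsymbol{x}-\boldsymbol{y}$ the image stays inside $B_1$ (since $|\boldsymbol{z}|\le|\boldsymbol{y}|\le 1$ there), so $\int_{B_1}|\boldsymbol{z}|^{-(1+\alpha)}d\boldsymbol{z} = 2\pi/(1-\alpha)$. On $D_2\cap B_1$ one has $|\boldsymbol{x}-\boldsymbol{y}|^{-1}<|\boldsymbol{y}|^{-1}$, so the integrand is bounded by $|\boldsymbol{y}|^{-(1+\alpha)}$ and $\int_{B_1}|\boldsymbol{y}|^{-(1+\alpha)}d\boldsymbol{y} = 2\pi/(1-\alpha)$. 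Both bounds are independent of $\boldsymbol{x}$, and it is precisely the restriction $\alpha<1$ that keeps $|\boldsymbol{z}|^{-(1+\alpha)}$ integrable in two dimensions. Combining the two parts gives $\|\boldsymbol{K}_h\|_{L^\infty}\le C(\|h\|_{L^1},\|h\|_{L^\infty(\mathbb{R}^2\setminus B_\delta)},\|\chi_\alpha^{-}h\|_{L^\infty},\alpha)$.

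For the vanishing at the origin, I would evaluate $\boldsymbol{K}_h(0) = \int_{\mathbb{R}^2}\boldsymbol{K}(-\boldsymbol{y})h(\boldsymbol{y})\,d\boldsymbol{y}$, noting that the above estimates at $\boldsymbol{x}=0$ in particular give absolute convergence of this integral. Since $\boldsymbol{K}$ is odd, $\boldsymbol{K}(-\boldsymbol{y}) = -\boldsymbol{K}(\boldsymbol{y})$, while a radial $h$ is even, so the integrand is odd in $\boldsymbol{y}$; absolute convergence then forces the integral to cancel, equivalently the angular integral $\int_0^{2\pi}(\sin\theta,-\cos\theta)\,d\theta = 0$ in polar coordinates makes $\boldsymbol{K}_h(0)=0$. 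I expect the main technical obstacle to be only the $h_1$ estimate, and the dichotomy $|\boldsymbol{y}|\gtrless|\boldsymbol{x}-\boldsymbol{y}|$ is the key idea that renders the uniform bound routine.
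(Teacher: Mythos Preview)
Your proposal is correct and follows essentially the same route as the paper: both arguments separate the tail of $h$ (handled via $L^1$ and $L^\infty(\mathbb{R}^2\setminus B_\delta)$) from the singular part near the origin, and both control the convolution of the two singularities $|\boldsymbol{y}|^{-\alpha}$ and $|\boldsymbol{x}-\boldsymbol{y}|^{-1}$ by the same dichotomy $|\boldsymbol{y}|\gtrless|\boldsymbol{x}-\boldsymbol{y}|$, with the vanishing $\boldsymbol{K}_h(0)=0$ coming from the angular integral in polar coordinates. The only cosmetic differences are that the paper cuts at radius $\delta$ rather than $1$ and organizes the splitting on $|\boldsymbol{x}-\boldsymbol{y}|$ first rather than on $|\boldsymbol{y}|$.
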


\begin{proof}
We have  
\begin{align*}
\left| \boldsymbol{K}_h(\boldsymbol{x}) \right| &\leq \frac{1}{2 \pi} \int_{\mathbb{R}^2} \frac{1}{|\boldsymbol{x} - \boldsymbol{y}|} |h(\boldsymbol{y})| d\boldsymbol{y} \\
& \leq \frac{1}{2 \pi} \int_{|\boldsymbol{x} - \boldsymbol{y}| < \delta } \frac{1}{|\boldsymbol{x} - \boldsymbol{y}|} |h(\boldsymbol{y})| d \boldsymbol{y} + \frac{1}{2 \pi} \int_{|\boldsymbol{x} - \boldsymbol{y}| \geq \delta } \frac{1}{|\boldsymbol{x} - \boldsymbol{y}|} |h(\boldsymbol{y})| d \boldsymbol{y}   \\
& \leq \frac{1}{2 \pi} \| \chi_\alpha^{-} h \|_{L^\infty} \left[ \int_{|\boldsymbol{y}| < |\boldsymbol{x} - \boldsymbol{y}| < \delta }  \frac{1}{|\boldsymbol{y}|^{1+\alpha}} d  \boldsymbol{y} + \int_{ |\boldsymbol{x} - \boldsymbol{y}| \leq |\boldsymbol{y}| < \delta } \frac{1}{|\boldsymbol{x} - \boldsymbol{y}|^{1+\alpha}} d \boldsymbol{y} \right] \\
&\qquad  + \frac{1}{2 \pi} \| h \|_{L^\infty(\mathbb{R}^2 \backslash B_\delta)} \int_{|\boldsymbol{x} - \boldsymbol{y}| < \delta \cap |\boldsymbol{y}| \geq \delta } \frac{1}{|\boldsymbol{x} - \boldsymbol{y}|} d \boldsymbol{y} +  \frac{1}{2 \pi \delta} \int_{\mathbb{R}^2 } |h(\boldsymbol{y})| d \boldsymbol{y}  \\
&\leq \frac{2 \delta^{1-\alpha}}{1 - \alpha} \| \chi_\alpha^{-} h \|_{L^\infty} + \delta \| h \|_{L^\infty(\mathbb{R}^2 \backslash B_\delta)} + \frac{1}{2 \pi \delta} \| h \|_{L^1}, 
\end{align*}
in which $\| \cdot \|_{L^p} = \| \cdot \|_{L^p(\mathbb{R}^2)}$ and this notation is used throughout this paper. Thus, $\boldsymbol{K}_h$ is uniformly bounded. In terms of the polar coordinates, the value of $\boldsymbol{K}_h$ at the origin is calculated as follows.
\begin{align*}
\boldsymbol{K}_h(0) &= - \frac{1}{2 \pi} \int_{\mathbb{R}^2} \frac{(y_2, - y_1)}{|\boldsymbol{y}|^2} h(\boldsymbol{y}) d\boldsymbol{y} \\
&= - \frac{1}{2 \pi} \int_0^\infty \int_0^{2 \pi}  \frac{(k \sin{\theta}, - k \cos{\theta}) }{k^2} h_r(k) k d\theta dk \\
&= - \frac{1}{2 \pi} \int_0^\infty h_r(k) dk \int_0^{2 \pi}  (\sin{\theta}, - \cos{\theta}) d\theta,  
\end{align*}
Since $h_r$ is integrable, we conclude that $\boldsymbol{K}_h(0) = 0$.
\end{proof}

\begin{rem}
When we show that $\boldsymbol{K}_h$ is bounded outside of a neighborhood of the origin, it is enough to suppose that $h \in L^1(\mathbb{R}^2) \cap L^\infty(\mathbb{R}^2 \backslash B_\delta)$.
\end{rem}

Next, we see the asymptotic behavior of $\boldsymbol{K}_h$. The following lemma asserts that it decays at least as fast as the singular kernel $\boldsymbol{K}$.

\begin{lem}
For $h \in L^1(\mathbb{R}^2)$ satisfying $\chi_1^{+} h \in L^1(\mathbb{R}^2) \cap L^\infty(\mathbb{R}^2)$, we have
\begin{equation*}
\limsup_{|\boldsymbol{x}| \rightarrow \infty} \left| \boldsymbol{x} \right| \left|\boldsymbol{K}_h(\boldsymbol{x}) \right| \leq \frac{1}{2 \pi} \| h \|_{L^1}.
\end{equation*}
\label{K_h-decay}
\end{lem}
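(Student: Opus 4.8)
The plan is to start from the triangle inequality
$|\boldsymbol{x}|\,|\boldsymbol{K}_h(\boldsymbol{x})| \le \frac{1}{2\pi}\int_{\mathbb{R}^2}\frac{|\boldsymbol{x}|}{|\boldsymbol{x}-\boldsymbol{y}|}\,|h(\boldsymbol{y})|\,d\boldsymbol{y}$
and to exploit the elementary fact that, for $\boldsymbol{y}$ ranging over any fixed compact set, the weight $|\boldsymbol{x}|/|\boldsymbol{x}-\boldsymbol{y}|$ tends to $1$ as $|\boldsymbol{x}|\to\infty$. Thus $\boldsymbol{K}_h$ should behave asymptotically like the bare kernel $\boldsymbol{K}$ carrying the total mass of $h$, which is exactly the content of the asserted bound $\frac{1}{2\pi}\|h\|_{L^1}$. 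Fixing $\varepsilon>0$, I would first choose $R$ so large that $\int_{|\boldsymbol{y}|>R}(1+\log|\boldsymbol{y}|)|h(\boldsymbol{y})|\,d\boldsymbol{y}<\varepsilon$, which is possible by the hypothesis $\chi_1^{+}h\in L^1$ and which in particular forces $\int_{|\boldsymbol{y}|>R}|h|<\varepsilon$.

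For $|\boldsymbol{x}|>2R$ I would split the integral over the three regions $\{|\boldsymbol{y}|\le R\}$, $\{|\boldsymbol{y}|>R,\ |\boldsymbol{x}-\boldsymbol{y}|\ge|\boldsymbol{x}|/2\}$ and $\{|\boldsymbol{x}-\boldsymbol{y}|<|\boldsymbol{x}|/2\}$. On the first region $|\boldsymbol{x}-\boldsymbol{y}|\ge|\boldsymbol{x}|-R$, so $|\boldsymbol{x}|/|\boldsymbol{x}-\boldsymbol{y}|\le |\boldsymbol{x}|/(|\boldsymbol{x}|-R)$ and the contribution is at most $\frac{1}{2\pi}\frac{|\boldsymbol{x}|}{|\boldsymbol{x}|-R}\|h\|_{L^1}$, whose $\limsup$ as $|\boldsymbol{x}|\to\infty$ is $\frac{1}{2\pi}\|h\|_{L^1}$ --- this is the main term. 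On the second region $|\boldsymbol{x}|/|\boldsymbol{x}-\boldsymbol{y}|\le 2$, so its contribution is bounded by $\frac{1}{\pi}\int_{|\boldsymbol{y}|>R}|h|<\varepsilon/\pi$, which vanishes as $R\to\infty$.

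The third region, the near-diagonal set $\{|\boldsymbol{x}-\boldsymbol{y}|<|\boldsymbol{x}|/2\}$ (on which automatically $|\boldsymbol{y}|>|\boldsymbol{x}|/2$), is where the real work lies and is the main obstacle. Here the triangle-inequality bound is too lossy: the prefactor $|\boldsymbol{x}|$ times the local $L^1$-norm of the singular weight $1/|\boldsymbol{x}-\boldsymbol{y}|$ grows like $|\boldsymbol{x}|$, and this is only partially compensated by the logarithmic decay $|h(\boldsymbol{y})|\le \|\chi_1^{+}h\|_{L^\infty}/(1+\log(|\boldsymbol{x}|/2))$ furnished by $\chi_1^{+}h\in L^\infty$. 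To close the estimate one cannot discard the sign of the kernel: I would retain the cancellation built into the Biot--Savart kernel, namely that $\boldsymbol{K}$ is odd and has vanishing angular mean, so that after the substitution $\boldsymbol{z}=\boldsymbol{x}-\boldsymbol{y}$ the leading singular part integrates to zero. Combining this cancellation with the uniform decay of $h$ on $\{|\boldsymbol{y}|>|\boldsymbol{x}|/2\}$ and the weighted tail bound $\int_{|\boldsymbol{y}|>R}(1+\log|\boldsymbol{y}|)|h|<\varepsilon$ should force the near-diagonal contribution to $0$. Letting $|\boldsymbol{x}|\to\infty$ and then $R\to\infty$ (i.e. $\varepsilon\to0$) then yields the claim.

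I expect the delicate point to be precisely this cancellation step: a crude modulus estimate on the near-diagonal core genuinely fails, so one must use the oscillation of $\boldsymbol{K}$ together with some regularity or radial structure of $h$ (in the applications $h\in C^1$, and for radial $h$ one can bypass the difficulty entirely, since the enclosed-mass identity $|\boldsymbol{x}|\,|\boldsymbol{K}_h(\boldsymbol{x})|=\frac{1}{2\pi}\big|\int_{|\boldsymbol{y}|<|\boldsymbol{x}|}h\big|$ gives the result at once). The remaining regions are routine and contribute only the stated main term and errors that disappear in the iterated limit.
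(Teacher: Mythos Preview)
Your difficulty on the near-diagonal region stems from a misreading of the hypothesis. In this paper $\chi_1^{+}(\boldsymbol{x})=|\boldsymbol{x}|$ for $|\boldsymbol{x}|>1$ (it is $\chi_\alpha^{+}$ with $\alpha=1$), \emph{not} the logarithmic weight $1+\log|\boldsymbol{x}|$ (that one is $\chi_{\log}^{+}$). Hence $\chi_1^{+}h\in L^\infty$ means precisely that $|\boldsymbol{y}|\,|h(\boldsymbol{y})|$ is bounded for $|\boldsymbol{y}|>1$, and $\chi_1^{+}h\in L^1$ means $\int_{|\boldsymbol{y}|>1}|\boldsymbol{y}|\,|h(\boldsymbol{y})|\,d\boldsymbol{y}<\infty$. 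This decay is far stronger than the logarithmic one you assumed, and it makes the near-diagonal region accessible by a pure modulus estimate; no oscillation or cancellation of $\boldsymbol{K}$ is needed, nor any regularity or radial structure of $h$.

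The paper's proof exploits this via the elementary inequality $|\boldsymbol{x}|\le |\boldsymbol{x}-\boldsymbol{y}|+|\boldsymbol{y}|$, which gives
\[
|\boldsymbol{x}|\,|\boldsymbol{K}_h(\boldsymbol{x})|\le \frac{1}{2\pi}\|h\|_{L^1}+\frac{1}{2\pi}\int_{\mathbb{R}^2}\frac{|\boldsymbol{y}|}{|\boldsymbol{x}-\boldsymbol{y}|}\,|h(\boldsymbol{y})|\,d\boldsymbol{y}.
\]
The first term is already the desired limit. In the second, the growing factor has been transferred from $|\boldsymbol{x}|$ to $|\boldsymbol{y}|$, where it is absorbed by $h$ because $|\boldsymbol{y}|\,|h(\boldsymbol{y})|\in L^1\cap L^\infty$ at infinity. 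One splits that integral at $|\boldsymbol{y}|=R$ (the inner part is at most $\frac{R}{|\boldsymbol{x}|-R}\|h\|_{L^1}\to 0$) and then the outer part at $|\boldsymbol{x}-\boldsymbol{y}|=\delta$: near the diagonal
\[
\int_{|\boldsymbol{x}-\boldsymbol{y}|<\delta}\frac{|\boldsymbol{y}|\,|h(\boldsymbol{y})|}{|\boldsymbol{x}-\boldsymbol{y}|}\,d\boldsymbol{y}\le 2\pi\delta\,\|\chi_1^{+}h\|_{L^\infty},
\]
and away from it the integral is bounded by $\delta^{-1}\|\chi_1^{+}h\|_{L^1(\mathbb{R}^2\setminus B_R)}$. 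Sending $|\boldsymbol{x}|\to\infty$, then $R\to\infty$, then $\delta\to 0$ finishes the argument. Your regions one and two are handled essentially as in the paper; the ``obstacle'' you identify in region three is an artifact of the misread weight, and the cancellation step you sketch would in any case not go through under a mere $L^\infty$ bound on $h$ with no further regularity.
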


\begin{proof}
We divide the integral in $\boldsymbol{K}_h$ as follows.
\begin{align*}
\left| \boldsymbol{x} \right| \left| \boldsymbol{K}_h (\boldsymbol{x}) \right| &\leq \frac{1}{2 \pi}\int_{\mathbb{R}^2} \left( 1 + \frac{| \boldsymbol{y} |}{|\boldsymbol{x} - \boldsymbol{y}|} \right) \left| h(\boldsymbol{y}) \right| d\boldsymbol{y} \\
& = \frac{1}{2\pi} \left[ \| h \|_{L^1} + \int_{B_R} \frac{| \boldsymbol{y} |}{|\boldsymbol{x} - \boldsymbol{y}|} \left| h(\boldsymbol{y}) \right| d\boldsymbol{y} + \int_{\mathbb{R}^2 \backslash B_R} \frac{| \boldsymbol{y} |}{|\boldsymbol{x} - \boldsymbol{y}|} \left| h(\boldsymbol{y}) \right| d\boldsymbol{y} \right]. 
\end{align*}
The second term in the right-hand side vanishes in the $|\boldsymbol{x}| \rightarrow \infty$ limit, since we have 
\begin{equation*}
\int_{B_R} \frac{| \boldsymbol{y} |}{|\boldsymbol{x} - \boldsymbol{y}|} \left| h(\boldsymbol{y}) \right| d\boldsymbol{y} \leq \frac{R}{|\boldsymbol{x}| - R} \int_{B_R} \left| h(\boldsymbol{y}) \right| d\boldsymbol{y},
\end{equation*}
for sufficiently large $|\boldsymbol{x}|$. The third term is estimated as follows.
\begin{align*}
\int_{\mathbb{R}^2 \backslash B_R} \frac{| \boldsymbol{y} |}{|\boldsymbol{x} - \boldsymbol{y}|} \left| h(\boldsymbol{y}) \right| d\boldsymbol{y} &= \int_{(\mathbb{R}^2 \backslash B_R) \cap |\boldsymbol{x} - \boldsymbol{y}| < \delta} \frac{| \boldsymbol{y} |}{|\boldsymbol{x} - \boldsymbol{y}|} \left| h(\boldsymbol{y}) \right| d\boldsymbol{y} \\
& \quad  +  \int_{(\mathbb{R}^2 \backslash B_R) \cap |\boldsymbol{x} - \boldsymbol{y}| \geq \delta} \frac{| \boldsymbol{y} |}{|\boldsymbol{x} - \boldsymbol{y}|} \left| h(\boldsymbol{y}) \right| d\boldsymbol{y} \\
&\leq  2 \pi \delta \| \chi_1^{+} h \|_{L^\infty(\mathbb{R}^2 \backslash B_R)} + \frac{1}{\delta} \| \chi_1^{+} h \|_{L^1(\mathbb{R}^2 \backslash B_R)}.
\end{align*}
Summarizing the above estimates, we find
\begin{equation*}
\limsup_{|\boldsymbol{x}| \rightarrow \infty} \left| \boldsymbol{x} \right| \left| \boldsymbol{K}_h (\boldsymbol{x}) \right| \leq  \frac{1}{2 \pi} \| h \|_{L^1} + \delta \| \chi_1^{+} h \|_{L^\infty(\mathbb{R}^2 \backslash B_1)} + \frac{1}{2 \pi \delta} \| \chi_1^{+} h \|_{L^1(\mathbb{R}^2 \backslash B_R)}.
\end{equation*}
Since $\chi_1^{+} h$ is integrable, the third term vanishes in the $R \rightarrow \infty$ limit. Finally, taking the $\delta \rightarrow 0$ limit, we obtain the desired result.
\end{proof}

In the following lemma, we show that $\boldsymbol{K}_h$ is quasi-Lipschitz continuous.

\begin{lem}
Let $h \in C^1(\mathbb{R}^2) \cap W^1_1(\mathbb{R}^2)$ satisfies $\chi_{\log}^{-} h \in L^\infty(\mathbb{R}^2)$ and $\chi_1 \nabla h \in L^\infty(\mathbb{R}^2)$. Then, 
\begin{equation*}
\left| \boldsymbol{K}_h (\boldsymbol{x}) - \boldsymbol{K}_h(\boldsymbol{x}') \right| \leq c \varphi\left( \left| \boldsymbol{x} - \boldsymbol{x}' \right| \right).
\end{equation*}
\label{Kh-est}
\end{lem}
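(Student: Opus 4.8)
The plan is to reduce the quasi-Lipschitz estimate to a pointwise bound on $\nabla\boldsymbol{K}_h$ carrying only a logarithmic singularity at the origin, and then to integrate this bound along the segment joining $\boldsymbol{x}$ and $\boldsymbol{x}'$. Set $r=|\boldsymbol{x}-\boldsymbol{x}'|$. The regime $r\ge1$ is immediate: there $\varphi(r)=1$, so it suffices to bound the difference by $2\|\boldsymbol{K}_h\|_{L^\infty}$, and the uniform boundedness of $\boldsymbol{K}_h$ follows from Lemma~\ref{K_h-bdd} once one observes that $\chi_{\log}^{-}h\in L^\infty$ forces $\chi_\alpha^{-}h\in L^\infty$ for every $\alpha\in(0,1)$ (the logarithm being dominated by any negative power near the origin) and that $h$ is bounded away from the origin, a consequence of $h\in W^1_1$ together with $\chi_1\nabla h\in L^\infty$. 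Hence the entire content lies in the range $r<1$.

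For $r<1$ I would first establish the gradient estimate
\[ |\nabla\boldsymbol{K}_h(\boldsymbol{x})|\le c\,(1-\log|\boldsymbol{x}|)\ \ (|\boldsymbol{x}|\le1),\qquad |\nabla\boldsymbol{K}_h(\boldsymbol{x})|\le c\ \ (|\boldsymbol{x}|>1). \]
The starting identity is $\nabla\boldsymbol{K}_h=\boldsymbol{K}\ast\nabla h$, valid off the origin, which I would justify by routing through the scalar potential rather than differentiating the singular kernel directly: since $\boldsymbol{K}=\nabla^\perp G$ and $G_h=G\ast h$, we have $\partial_j\boldsymbol{K}_h=\nabla^\perp\partial_j G_h=\nabla^\perp(G\ast\partial_j h)=\boldsymbol{K}\ast\partial_j h$, both transfers of the derivative being legitimate because the merely logarithmic singularity of $G$ makes the boundary integrals over shrinking spheres vanish, while $\nabla h\in L^1$ and the decay of $\boldsymbol{K}$ guarantee convergence. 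Granting this, $|\nabla\boldsymbol{K}_h(\boldsymbol{x})|\le\frac{1}{2\pi}\int_{\mathbb{R}^2}|\boldsymbol{x}-\boldsymbol{y}|^{-1}|\nabla h(\boldsymbol{y})|\,d\boldsymbol{y}$, which I split over $\{|\boldsymbol{y}|>1\}$ and $\{|\boldsymbol{y}|\le1\}$. On the former, $\chi_1\nabla h\in L^\infty$ gives $|\nabla h|\le c$ and $\nabla h\in L^1$, and a further split according to whether $|\boldsymbol{x}-\boldsymbol{y}|$ is below or above $1$ yields a uniform bound; on the latter, $\chi_1\nabla h\in L^\infty$ gives $|\nabla h(\boldsymbol{y})|\le c|\boldsymbol{y}|^{-1}$, so the contribution is dominated by $\int_{|\boldsymbol{y}|\le1}|\boldsymbol{y}|^{-1}|\boldsymbol{x}-\boldsymbol{y}|^{-1}\,d\boldsymbol{y}$, which, after isolating the balls $\{|\boldsymbol{y}|<|\boldsymbol{x}|/2\}$ and $\{|\boldsymbol{x}-\boldsymbol{y}|<|\boldsymbol{x}|/2\}$ and treating the intermediate annulus where the integrand is comparable to $|\boldsymbol{y}|^{-2}$, is of order $1-\log|\boldsymbol{x}|$.

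With the gradient estimate available, for $r<1$ I would parametrize the segment $\boldsymbol{\gamma}(s)=\boldsymbol{x}'+s(\boldsymbol{x}-\boldsymbol{x}')$, $s\in[0,1]$, and write $|\boldsymbol{K}_h(\boldsymbol{x})-\boldsymbol{K}_h(\boldsymbol{x}')|\le r\int_0^1\bigl(1-\log\operatorname{dist}(\boldsymbol{\gamma}(s),0)\bigr)\,ds$. Writing $d$ for the minimal distance of the segment to the origin and $s_0$ for the parameter of closest approach, one has $\operatorname{dist}(\boldsymbol{\gamma}(s),0)\gtrsim\max(d,r|s-s_0|)$, and a direct evaluation of the resulting integral, distinguishing the cases $d\ge r$ and $d<r$, bounds it by a constant multiple of $1-\log r$, so that the whole expression is $\mathcal{O}(r(1-\log r))=\mathcal{O}(\varphi(r))$. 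The one delicate point is that $\boldsymbol{K}_h$ is only $C^1$ off the origin; but $\boldsymbol{K}_h$ is continuous everywhere, with $\boldsymbol{K}_h(0)=0$ by Lemma~\ref{K_h-bdd}, and the gradient bound is integrable along any line through the origin, so $\boldsymbol{K}_h$ is absolutely continuous on the segment and the fundamental theorem of calculus remains valid even when the segment meets the origin.

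The crux of the argument is the pointwise gradient bound. Its two genuine difficulties are the justification of transferring the derivative onto $h$ — which I avoid turning into a Calder\'on--Zygmund estimate precisely by passing through $G$, whose singularity is only logarithmic — and the control of the double-singularity integral $\int_{|\boldsymbol{y}|\le1}|\boldsymbol{y}|^{-1}|\boldsymbol{x}-\boldsymbol{y}|^{-1}\,d\boldsymbol{y}$, in which the singularity of the Biot--Savart kernel at $\boldsymbol{y}=\boldsymbol{x}$ collides with the singularity of $\nabla h$ at the origin. It is exactly this collision that generates the logarithm, and therefore the quasi-Lipschitz rather than Lipschitz modulus $\varphi$; the final integration along the segment is then only bookkeeping.
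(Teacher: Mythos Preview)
Your argument is correct, and it takes a genuinely different route from the paper's. The paper never computes $\nabla\boldsymbol{K}_h$ pointwise; instead it works directly with the difference, writing
\[
\boldsymbol{K}_h(\boldsymbol{x})-\boldsymbol{K}_h(\boldsymbol{x}')=\int_{\mathbb{R}^2}\boldsymbol{K}(\boldsymbol{y})\bigl(h(\boldsymbol{x}-\boldsymbol{y})-h(\boldsymbol{x}'-\boldsymbol{y})\bigr)\,d\boldsymbol{y}
\]
and splitting the integral into the three shells $\{|\boldsymbol{x}-\boldsymbol{y}|<2r\}$, $\{2r\le|\boldsymbol{x}-\boldsymbol{y}|<2\}$, $\{|\boldsymbol{x}-\boldsymbol{y}|\ge2\}$. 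On the inner shell the two values of $h$ are bounded separately using $\chi_{\log}^{-}h\in L^\infty$, and on the outer shells the mean value theorem is applied to $h$, with $\chi_1\nabla h\in L^\infty$ controlling the resulting integrand. This is the classical Yudovich-type decomposition, carried out at the level of $h$ rather than $\omega$.

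The trade-off is this: the paper's argument is translation-invariant in the sense that the splitting is centred at $\boldsymbol{x}$, so one never has to identify where $\nabla\boldsymbol{K}_h$ is singular or worry about segments meeting that point; it also uses $\chi_{\log}^{-}h\in L^\infty$ in an essential way (in the inner shell), whereas your route uses it only to invoke Lemma~\ref{K_h-bdd} for the trivial case $r\ge1$. Your approach, on the other hand, yields the strictly stronger pointwise information $|\nabla\boldsymbol{K}_h(\boldsymbol{x})|\le c(1-\log|\boldsymbol{x}|)$ for $|\boldsymbol{x}|\le1$, which is of independent interest and makes transparent why the modulus is $\varphi$ rather than Lipschitz. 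Two small caveats: the conclusion $\boldsymbol{K}_h(0)=0$ from Lemma~\ref{K_h-bdd} requires $h$ radial, which is not assumed here, but you only need continuity of $\boldsymbol{K}_h$ along the segment, and that follows from the integrability of your gradient bound along lines; and the hypothesis $h\in C^1(\mathbb{R}^2)$ in the paper should be read as $C^1$ away from the origin (cf.\ the remark after Theorem~\ref{well-posed} and the Euler-$\alpha$ example), which is consistent with your treatment.
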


\begin{proof}
Let us set $r = |\boldsymbol{x} - \boldsymbol{x}'|$ and note that it is enough to prove the lemma for sufficiently small $r > 0$. We have
\begin{align*}
\left| \boldsymbol{K}_h (\boldsymbol{x}) - \boldsymbol{K}_h(\boldsymbol{x}') \right| & = \left| \int_{\mathbb{R}^2} \boldsymbol{K}(\boldsymbol{y}) \left(  h(\boldsymbol{x} - \boldsymbol{y}) - h(\boldsymbol{x}' - \boldsymbol{y}) \right) d \boldsymbol{y} \right| \\
& \leq \int_{|\boldsymbol{x} - \boldsymbol{y}| < 2 r} \left| \boldsymbol{K}(\boldsymbol{y}) \right| \left(  \left| h(\boldsymbol{x} - \boldsymbol{y}) \right| + \left| h(\boldsymbol{x}' - \boldsymbol{y}) \right| \right) d\boldsymbol{y} \\
&\quad + \int_{2r \leq |\boldsymbol{x} - \boldsymbol{y}| < 2} \left| \boldsymbol{K}(\boldsymbol{y}) \right| \left| h(\boldsymbol{x} - \boldsymbol{y}) - h(\boldsymbol{x}' - \boldsymbol{y}) \right| d\boldsymbol{y}  \\
&\quad + \int_{|\boldsymbol{x} - \boldsymbol{y}| \geq 2} \left| \boldsymbol{K}(\boldsymbol{y}) \right| \left| h(\boldsymbol{x} - \boldsymbol{y}) - h(\boldsymbol{x}' - \boldsymbol{y}) \right| d\boldsymbol{y}  \\
&\equiv I_1 + I_2 + I_3.
\end{align*}
Considering the fact that if $|\boldsymbol{x} - \boldsymbol{y}| < 2 r$, then $|\boldsymbol{x}' - \boldsymbol{y}| < 3 r$, we obtain 
\begin{align*}
I_1 &\leq \frac{1}{\pi} \sup_{\boldsymbol{x} \in \mathbb{R}^2} \int_{|\boldsymbol{x} - \boldsymbol{y}| < 3 r} \frac{1}{|\boldsymbol{y}|} \left| h\left( \boldsymbol{x} - \boldsymbol{y} \right) \right| d\boldsymbol{y}  \\
&\leq \frac{1}{\pi} \| \chi_{\log}^{-} h \|_{L^\infty} \sup_{\boldsymbol{x} \in \mathbb{R}^2} \int_{|\boldsymbol{x} - \boldsymbol{y}| < 3 r} \frac{ 1 - \log{|\boldsymbol{x} - \boldsymbol{y}|}}{|\boldsymbol{y}|} d\boldsymbol{y} \\
&\leq \frac{1}{\pi} \| \chi_{\log}^{-} h \|_{L^\infty} \sup_{\boldsymbol{x} \in \mathbb{R}^2} \left[ \int_{|\boldsymbol{y}| < |\boldsymbol{x} - \boldsymbol{y}| < 3 r} \frac{ 1 - \log{|\boldsymbol{y}|}}{|\boldsymbol{y}|} d\boldsymbol{y} \right.\\
& \left. \hspace{40mm} + \int_{ |\boldsymbol{x} - \boldsymbol{y}| \leq |\boldsymbol{y}|\, \cap\, |\boldsymbol{x} - \boldsymbol{y}| < 3 r } \frac{ 1 - \log{|\boldsymbol{x} - \boldsymbol{y}|}}{|\boldsymbol{x} - \boldsymbol{y}|} d\boldsymbol{y} \right] \\
&\leq c_1 \varphi(r) \| \chi_{\log}^{-} h \|_{L^\infty}.
\end{align*}
In order to estimate $I_2$ and $I_3$, we apply the mean value theorem to $h$. Then, we obtain 
\begin{equation*}
\left| h(\boldsymbol{x} - \boldsymbol{y}) - h(\boldsymbol{x}' - \boldsymbol{y}) \right| \leq r \int_0^1 \left| (\nabla h)( \boldsymbol{x} - \boldsymbol{y} + \tau (\boldsymbol{x}' - \boldsymbol{x}) ) \right| d\tau.
\end{equation*}
Moreover, it follows from $|\boldsymbol{x} - \boldsymbol{y}| \geq 2r$ that
\begin{equation}
|\boldsymbol{x} - \boldsymbol{y} + \tau (\boldsymbol{x}' - \boldsymbol{x})| \geq \left| |\boldsymbol{x} - \boldsymbol{y}| - \tau r \right| \geq \frac{1}{2}|\boldsymbol{x} - \boldsymbol{y}| + (1 - \tau) r \geq \frac{1}{2}|\boldsymbol{x} - \boldsymbol{y}|. \label{x-y}
\end{equation}
Thus, setting $D = \left\{ \boldsymbol{y}\in\mathbb{R}^2 \ |\  2 r \leq |\boldsymbol{x} - \boldsymbol{y}| < 2 \right\}$, we find
\begin{align*}
I_2 &\leq \frac{r}{2 \pi} \| \chi_1 \nabla h \|_{L^\infty} \int_D \frac{1}{|\boldsymbol{y}|} \int_0^1 \frac{1}{|\boldsymbol{x} - \boldsymbol{y} + \tau (\boldsymbol{x}' - \boldsymbol{x})|} d\tau d\boldsymbol{y} \\
&\leq \frac{r}{\pi} \| \chi_1 \nabla h \|_{L^\infty} \int_D \frac{1}{|\boldsymbol{y}||\boldsymbol{x} - \boldsymbol{y}|} d\boldsymbol{y} \\
&\leq \frac{r}{\pi} \| \chi_1 \nabla h \|_{L^\infty} \left[ \frac{1}{2 r} \int_{D\, \cap\, |\boldsymbol{y}| < 2 r} \frac{1}{|\boldsymbol{y}|} d\boldsymbol{y} + \int_{D\, \cap\, 2 r \leq |\boldsymbol{y}| < |\boldsymbol{x} - \boldsymbol{y}|} \frac{1}{|\boldsymbol{y}|^2} d\boldsymbol{y} \right. \\ 
&\hspace{65mm}  + \left. \int_{D\, \cap\, |\boldsymbol{x} - \boldsymbol{y}| \leq |\boldsymbol{y}|} \frac{1}{|\boldsymbol{x} - \boldsymbol{y}|^2} d\boldsymbol{y} \right] \\
&\leq 2 r \| \chi_1 \nabla h \|_{L^\infty} \left[ \frac{1}{2 r} \int_0^{2r} dk + 2 \int_{2r}^{2} \frac{1}{k} dk \right] \\
&\leq c_2 \varphi(r) \| \chi_1 \nabla h \|_{L^\infty}.
\end{align*}
Finally, it follows from (\ref{x-y}) that
\begin{align*}
I_3 &\leq \frac{r}{2 \pi} \int_{2 \leq |\boldsymbol{x} - \boldsymbol{y}|} \frac{1}{|\boldsymbol{y}|} \int_0^1 \left| (\nabla h) (\boldsymbol{x} - \boldsymbol{y} + \tau (\boldsymbol{x}' - \boldsymbol{x})) \right| d\tau d\boldsymbol{y} \\
&\leq \frac{r}{2 \pi} \left[  \| \chi_1 \nabla h \|_{L^\infty} \int_{2 \leq |\boldsymbol{x} - \boldsymbol{y}|\, \cap\, |\boldsymbol{y}| < 1} \frac{2}{|\boldsymbol{y}| |\boldsymbol{x} - \boldsymbol{y}|} d\boldsymbol{y} \right. \\
&\hspace{15mm} + \left. \int_{2 \leq |\boldsymbol{x} - \boldsymbol{y}|\, \cap\, |\boldsymbol{y}| \geq 1 } \int_0^1 \left| (\nabla h) (\boldsymbol{x} - \boldsymbol{y} + \tau (\boldsymbol{x}' - \boldsymbol{x})) \right| d\tau d\boldsymbol{y} \right] \\
&\leq r \left[ \| \chi_1 \nabla h \|_{L^\infty} + \frac{1}{2 \pi} \| \nabla h \|_{L^1} \right]. 
\end{align*}
Combining the above three estimates, we achieve the conclusion.
\end{proof}

\begin{rem}
As a consequence of above lemmas, we find $\boldsymbol{K}_h \in C_0(\mathbb{R}^2)$. This plays an essential role to show the existence of a unique global solution of (\ref{RPDE}) with initial vorticity in $\mathcal{M}(\mathbb{R}^2)$. Indeed, owing to the following estimate,
\begin{equation} 
\| \boldsymbol{u}_h \|_{L^\infty} \leq \sup_{\boldsymbol{x} \in \mathbb{R}^2} \left| \int_{\mathbb{R}^2} \boldsymbol{K}_h(\boldsymbol{x} - \boldsymbol{y}) q(\boldsymbol{y}) d\boldsymbol{y} \right| \leq \| \boldsymbol{K}_h \|_{L^\infty} \| q \|_{\mathcal{M}}, \label{unif-bdd}
\end{equation}
and Lemma~\ref{Kh-est}, we find that $\boldsymbol{u}_h$ is uniformly bounded and quasi-Lipschitz continuous , which allows us to apply the classical method to prove Theorem~\ref{well-posed}. \label{C_0}
\end{rem}

Finally, we investigate the relation between $\boldsymbol{K}^\varepsilon$ and $\boldsymbol{K}$ to prove Theorem~\ref{convergence}. Before that, we show the following lemma.
\begin{lem}
Suppose that $h \in L^1(\mathbb{R}^2) \cap L^\infty(\mathbb{R}^2 \backslash B_\delta)$, $\delta > 0$ satisfies $\chi_\alpha^{-} h \in L^\infty(\mathbb{R}^2)$, $\alpha\in[0,1]$, and $\chi_{\log}^{+} h \in L^1(\mathbb{R}^2)$. Then, the scalar function 
\begin{equation*}
G_h(\boldsymbol{x}) = - \frac{1}{2 \pi} \int_{\mathbb{R}^2} \left( \log{|\boldsymbol{x} - \boldsymbol{y}|} \right) h\left( \boldsymbol{y} \right) d\boldsymbol{y},
\end{equation*}
is well-defined for any $\boldsymbol{x}\in\mathbb{R}^2$ and satisfies the Poisson equation, $- \Delta G_h = h$, in the weak sense that $\langle \nabla G_h, \nabla\phi \rangle = - \langle h, \phi \rangle$ for any $\phi \in C_0^\infty(\mathbb{R}^2)$. Moreover, we have
\begin{equation}
\lim_{|\boldsymbol{x}| \rightarrow \infty} \frac{G_h(\boldsymbol{x})}{\log{|\boldsymbol{x}|}} =  - \frac{1}{2 \pi} \int_{\mathbb{R}^2} h(\boldsymbol{y}) d\boldsymbol{y}. \label{E_h-lim}
\end{equation}
Especially, if $h$ is a radial function, $G_h$ is also radial, namely $G_h(\boldsymbol{x}) = G_{h,r}(|\boldsymbol{x}|)$. 

Regarding the derivative of $G_{h,r}$, suppose that the radial function $h \in L^1(\mathbb{R}^2)$ satisfies $\chi_\alpha^{-} h \in L^\infty(\mathbb{R}^2)$, $\alpha\in[0,1)$ and $\chi_1^{+} h \in L^1(\mathbb{R}^2) \cap L^\infty(\mathbb{R}^2)$. Then, we have $G'_{h,r}(0) = 0$ and
\begin{equation}
\lim_{r \rightarrow \infty} r G'_{h,r}(r) = - \frac{1}{2 \pi} \int_{\mathbb{R}^2} h(\boldsymbol{y}) d\boldsymbol{y}. \label{G'_h-lim}
\end{equation}
\label{K_eps-lem}
\end{lem}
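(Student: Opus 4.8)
The plan is to dispatch the five assertions one after another, each reducing to a single estimate governed by one of the hypotheses on $h$.

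\emph{Finiteness and the weak Poisson equation.} First I would bound the defining integral of $G_h(\boldsymbol{x})$ by splitting $\mathbb{R}^2$ according to the three potential sources of divergence: the logarithmic singularity of $\log|\boldsymbol{x}-\boldsymbol{y}|$ at $\boldsymbol{y}=\boldsymbol{x}$, the singularity of $h$ at the origin, and the growth of $\log|\boldsymbol{x}-\boldsymbol{y}|$ as $|\boldsymbol{y}|\to\infty$. On $\{|\boldsymbol{x}-\boldsymbol{y}|<1\}$ the factor $|\log|\boldsymbol{x}-\boldsymbol{y}||$ is integrable in $\mathbb{R}^2$ and $h$ is controlled either by $\|h\|_{L^\infty(\mathbb{R}^2\setminus B_\delta)}$ or, near the origin, by $\chi_\alpha^- h\in L^\infty$ (note $\alpha\le 1<2$); for $|\boldsymbol{x}-\boldsymbol{y}|\ge 1$ with $\boldsymbol{y}$ in a bounded set the logarithm is bounded and paired with $h\in L^1$; and the tail $|\boldsymbol{y}|\to\infty$, after the elementary bound $\log|\boldsymbol{x}-\boldsymbol{y}|\le 1+\log 2+\log|\boldsymbol{y}|$, is absorbed precisely by $\chi_{\log}^+ h\in L^1$. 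The same splitting yields $G_h,\nabla G_h\in L^1_{loc}$, after which the weak identity follows by writing $G_h=G\ast h$ with $G$ the fundamental solution of $-\Delta$, integrating by parts, and using Fubini together with $\int_{\mathbb{R}^2} G(\boldsymbol{x}-\boldsymbol{y})(-\Delta\phi)(\boldsymbol{x})\,d\boldsymbol{x}=\phi(\boldsymbol{y})$ to reach the weak Poisson formulation recorded in the statement.

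\emph{The ratio limit (\ref{E_h-lim}).} I would write $G_h(\boldsymbol{x})/\log|\boldsymbol{x}|=-\frac{1}{2\pi}\int_{\mathbb{R}^2}(\log|\boldsymbol{x}-\boldsymbol{y}|/\log|\boldsymbol{x}|)\,h(\boldsymbol{y})\,d\boldsymbol{y}$ and split the domain at $|\boldsymbol{y}|=|\boldsymbol{x}|/2$. On the inner region $\tfrac12|\boldsymbol{x}|\le|\boldsymbol{x}-\boldsymbol{y}|\le\tfrac32|\boldsymbol{x}|$ forces $\log|\boldsymbol{x}-\boldsymbol{y}|/\log|\boldsymbol{x}|\to 1$ uniformly, so this part tends to $-\frac{1}{2\pi}\int_{\mathbb{R}^2} h$ by $h\in L^1$. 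The outer region must be shown to vanish: away from $\boldsymbol{y}=\boldsymbol{x}$ its contribution is at most $(\log|\boldsymbol{x}|)^{-1}\int_{|\boldsymbol{y}|\ge|\boldsymbol{x}|/2}(1+\log|\boldsymbol{y}|)|h|\,d\boldsymbol{y}\to 0$ via $\chi_{\log}^+ h\in L^1$, while the local logarithmic singularity at $\boldsymbol{y}=\boldsymbol{x}$ (now sitting in the far field, where $h$ is bounded) contributes at most $C\|h\|_{L^\infty(\mathbb{R}^2\setminus B_\delta)}/\log|\boldsymbol{x}|\to 0$. I expect this to be the main obstacle, since the unbounded growth of $\log|\boldsymbol{x}-\boldsymbol{y}|$ and the slow decay of $h$ must be tamed simultaneously, and it is exactly the place where the hypothesis $\chi_{\log}^+ h\in L^1$ is indispensable.

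\emph{Radial symmetry and the radial derivative.} Radial symmetry of $G_h$ is immediate from the substitution $\boldsymbol{y}\mapsto R\boldsymbol{y}$ for a rotation $R$, using $|R\boldsymbol{x}-R\boldsymbol{y}|=|\boldsymbol{x}-\boldsymbol{y}|$ and $h(R\boldsymbol{y})=h(\boldsymbol{y})$. For (\ref{G'_h-lim}) I would first differentiate under the integral sign to identify $\nabla G_h$ with a rotation of $\boldsymbol{K}_h$ through (\ref{KGh}); by Lemmas~\ref{K_h-bdd}--\ref{Kh-est} this field is bounded and continuous, so $G_{h,r}\in C^1(0,\infty)$ and the divergence theorem applies. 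Integrating $-\Delta G_h=h$ over $B_r$ then gives $-2\pi r\,G'_{h,r}(r)=\int_{B_r}h=2\pi\int_0^r k\,h_r(k)\,dk$, that is, $r\,G'_{h,r}(r)=-\int_0^r k\,h_r(k)\,dk$. Letting $r\to\infty$ and using that $\int_0^\infty k\,h_r(k)\,dk=\frac{1}{2\pi}\int_{\mathbb{R}^2} h$ converges absolutely (by $\chi_\alpha^- h\in L^\infty$ near $0$ and $\chi_1^+ h\in L^1$ at infinity) yields (\ref{G'_h-lim}); letting $r\to 0$ and estimating $|\int_0^r k\,h_r(k)\,dk|\le C r^{2-\alpha}$ with $\alpha<1$ gives $G'_{h,r}(r)=O(r^{1-\alpha})\to 0$, hence $G'_{h,r}(0)=0$. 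The only subtlety here is the regularity justification, namely passing the derivative inside the integral and identifying it with the continuous kernel $\boldsymbol{K}_h$, which is precisely what makes the Gauss-law computation rigorous.
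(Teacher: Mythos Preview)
Your treatment of the first four assertions (finiteness, the weak Poisson identity, the ratio limit~(\ref{E_h-lim}), and radial symmetry) matches the paper's in substance: the paper also splits at $|\boldsymbol{x}-\boldsymbol{y}|=1$, controls the near-origin behaviour of $h$ via $\chi_\alpha^- h\in L^\infty$ and the tail via $\chi_{\log}^+ h\in L^1$, and proves~(\ref{E_h-lim}) by a cutoff (at a fixed radius $R$ rather than your moving $|\boldsymbol{x}|/2$) followed by dominated convergence; radiality is done by an explicit polar-coordinate computation rather than your rotation substitution.

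For $G'_{h,r}(0)=0$ and~(\ref{G'_h-lim}) you take a genuinely different route. The paper writes
\[
G'_{h,r}(r)=-\frac{1}{2\pi}\int_{\mathbb{R}^2}\frac{r-y_1}{(r-y_1)^2+y_2^2}\,h(\boldsymbol{y})\,d\boldsymbol{y},
\]
reads off $G'_{h,r}(0)=0$ from the calculation in Lemma~\ref{K_h-bdd}, and gets~(\ref{G'_h-lim}) by isolating the main term and bounding the remainder by $\int|\boldsymbol{y}|\,|h(\boldsymbol{y})|/|\boldsymbol{x}_r-\boldsymbol{y}|\,d\boldsymbol{y}$, which vanishes exactly as in Lemma~\ref{K_h-decay}. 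Your Gauss-law argument instead yields the closed formula $r\,G'_{h,r}(r)=-\int_0^r k\,h_r(k)\,dk$, from which both limits drop out; this is tidier and in fact anticipates the identity the paper re-derives inside Proposition~\ref{K_con}.

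One caution on your justification: you invoke Lemma~\ref{Kh-est} for continuity of $\boldsymbol{K}_h$, but that lemma assumes $h\in C^1\cap W^1_1$ and $\chi_1\nabla h\in L^\infty$, none of which appear in the second half of the present statement. Fortunately you do not need continuity. Lemma~\ref{K_h-bdd} alone gives $|G'_{h,r}|\le\|\boldsymbol{K}_h\|_{L^\infty}<\infty$, so $r\,G'_{h,r}(r)\to 0$ as $r\to 0$; the weak Poisson equation tested against radial $\phi$ then reads $(r\,G'_{h,r})'=-r\,h_r$ distributionally on $(0,\infty)$, and since $r\,h_r\in L^1(0,\infty)$ this integrates to your formula without the classical divergence theorem.
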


\begin{proof}
We first show that $G_h$ is well-defined. For any fixed $\boldsymbol{x}\in\mathbb{R}^2$, we divide the integral into two parts,
\begin{equation*}
\left| G_h(\boldsymbol{x}) \right| \leq \frac{1}{2 \pi} \left[ \int_{|\boldsymbol{x} - \boldsymbol{y}| < 1} \left| \log{|\boldsymbol{x} - \boldsymbol{y}|} \right| | h\left( \boldsymbol{y} \right) | d\boldsymbol{y} + \int_{|\boldsymbol{x} - \boldsymbol{y}| \geq 1} \left| \log{|\boldsymbol{x} - \boldsymbol{y}|} \right| | h\left( \boldsymbol{y} \right) | d\boldsymbol{y} \right].
\end{equation*}
Each integral is estimated as follows.
\begin{align*}
\int_{|\boldsymbol{x} - \boldsymbol{y}| < 1} \left| \log{|\boldsymbol{x} - \boldsymbol{y}|} \right| | h\left( \boldsymbol{y} \right) | d\boldsymbol{y} & \leq \| \chi_{\alpha}^{-} h \|_{L^\infty} \left[ \int_{ |\boldsymbol{y}| < |\boldsymbol{x} - \boldsymbol{y}| < 1 \, \cap\, |\boldsymbol{y}| < \delta} \frac{ - \log{|\boldsymbol{y}|} }{|\boldsymbol{y}|^\alpha} d\boldsymbol{y} \right. \\
& \hspace{25mm} \left. + \int_{ |\boldsymbol{x} - \boldsymbol{y}| \leq |\boldsymbol{y}| < \delta} \frac{ - \log{|\boldsymbol{x} - \boldsymbol{y}|} }{|\boldsymbol{x} - \boldsymbol{y}|^\alpha} d\boldsymbol{y}\right] \\
& \quad + \| h \|_{L^\infty(\mathbb{R}^2 \backslash B_\delta)} \int_{|\boldsymbol{x} - \boldsymbol{y}| < 1 \, \cap\, |\boldsymbol{y}| \geq \delta} \left( - \log{|\boldsymbol{x} - \boldsymbol{y}|} \right) d\boldsymbol{y} \\
&\leq 4 \pi \| \chi_{\alpha}^{-} h \|_{L^\infty} \int_0^\delta k^{1-\alpha} ( - \log{k}) dk \\
& \quad + 2 \pi \| h \|_{L^\infty(\mathbb{R}^2 \backslash B_\delta)} \int_0^1 k ( - \log{k}) dk\\
&\leq c \left(  \| \chi_{\alpha}^{-} h \|_{L^\infty} + \| h \|_{L^\infty(\mathbb{R}^2 \backslash B_\delta)} \right),
\end{align*}
and 
\begin{align*}
\int_{|\boldsymbol{x} - \boldsymbol{y}| \geq 1} \left| \log{|\boldsymbol{x} - \boldsymbol{y}|} \right| | h\left( \boldsymbol{y} \right) | d\boldsymbol{y} & \leq \int_{|\boldsymbol{x} - \boldsymbol{y}| \geq 1} \log{( |\boldsymbol{x}| + |\boldsymbol{y}|)} | h\left( \boldsymbol{y} \right) | d\boldsymbol{y} \\
&\leq \chi_{\{|\boldsymbol{x}| \geq 1/2 \}}\int_{|\boldsymbol{y}| < |\boldsymbol{x}|} \log{(2|\boldsymbol{x}|)} | h\left( \boldsymbol{y} \right) | d\boldsymbol{y} \\
&\qquad  + \int_{|\boldsymbol{y}| \geq |\boldsymbol{x}|\, \cap\, |\boldsymbol{y}| \geq 1/2} \log{(2|\boldsymbol{y}|)} | h\left( \boldsymbol{y} \right) | d\boldsymbol{y} \\
&\leq \chi_{\log}^{+}(\boldsymbol{x}) \| h \|_{L^1} + c\left(  \| h \|_{L^1} +  \| \chi_{\log}^{+} h \|_{L^1} \right). 
\end{align*}
Thus, we find
\begin{equation}
\left| G_h(\boldsymbol{x}) \right| \leq  \chi_{\log}^{+}(\boldsymbol{x}) \| h \|_{L^1} + c \left(  \| h \|_{L^1} +  \| \chi_{\log}^{+} h \|_{L^1} + \| \chi_{\alpha}^{-} h \|_{L^\infty} + \| h \|_{L^\infty(\mathbb{R}^2 \backslash B_\delta)} \right). \label{E_h-WD}
\end{equation}
In order to see an asymptotic behavior of $G_h$, we decompose the integral as follows. 
\begin{align*}
\frac{G_h(\boldsymbol{x})}{\log{|\boldsymbol{x}|}} = - \frac{1}{2 \pi} \left[ \int_{B_R} \frac{\log{|\boldsymbol{x} - \boldsymbol{y}|}}{\log{|\boldsymbol{x}|}} h(\boldsymbol{y}) d\boldsymbol{y} + \int_{\mathbb{R}^2 \backslash B_R} \frac{\log{|\boldsymbol{x} - \boldsymbol{y}|}}{\log{|\boldsymbol{x}|}} h(\boldsymbol{y}) d\boldsymbol{y} \right]
\end{align*}
As for the first term, since we have the pointwise convergence, $\log{|\boldsymbol{x} - \boldsymbol{y}|} / \log{|\boldsymbol{x}|} \rightarrow 1$ as $| \boldsymbol{x} | \rightarrow \infty$, and 
\begin{equation*}
\frac{\log{|\boldsymbol{x} - \boldsymbol{y}|}}{\log{|\boldsymbol{x}|}} |h(\boldsymbol{y})| \leq c |h(\boldsymbol{y})|,
\end{equation*}
for $\boldsymbol{y} \in B_{R}$ and sufficiently large $|\boldsymbol{x}|$, the dominated convergence theorem gives
\begin{equation*}  
 \lim_{|\boldsymbol{x}| \rightarrow \infty} \int_{B_R} \frac{\log{|\boldsymbol{x} - \boldsymbol{y}|}}{\log{|\boldsymbol{x}|}} h(\boldsymbol{y}) d\boldsymbol{y} = \int_{B_R} h(\boldsymbol{y}) d\boldsymbol{y}.
\end{equation*}
Based on the calculations in (\ref{E_h-WD}), we find the following estimate for sufficiently large $|\boldsymbol{x}|$. 
\begin{align*}
\left| \int_{\mathbb{R}^2 \backslash B_R} \frac{\log{|\boldsymbol{x} - \boldsymbol{y}|}}{\log{|\boldsymbol{x}|}} h(\boldsymbol{y}) d\boldsymbol{y} \right| & \leq \| h \|_{L^1(\mathbb{R}^2 \backslash B_R)} \\
& \quad + \frac{c}{\log{|\boldsymbol{x}|}} \left(  \| h \|_{L^1} +  \| \chi_{\log}^{+} h \|_{L^1} + \| h \|_{L^\infty(\mathbb{R}^2 \backslash B_\delta)} \right).
\end{align*}
Thus, taking the $|\boldsymbol{x}| \rightarrow \infty$ limit and then the $R \rightarrow \infty$ limit, we obtain the desired result (\ref{E_h-lim}). According to the classical theory, we readily confirm that $G_h$ is a weak solution of the Poisson equation, see \cite{Masaki}. Next, we rewrite $G_h$ in the polar coordinates,
\begin{equation*}
G_{h,r}(r, \theta) = - \frac{1}{2 \pi} \int_{\mathbb{R}^2} \left( \log{\sqrt{(r \cos{\theta} - y_1)^2 + (r \sin{\theta} - y_2)^2}} \right) h(\boldsymbol{y}) d\boldsymbol{y}.
\end{equation*}
Then, for a radial function $h$, we have
\begin{align*}
G_{h,r}(r, \theta) &= - \frac{1}{2 \pi} \int_0^\infty \int_0^{2 \pi} \left( \log{ \sqrt{ (r \cos{\theta} - \tilde{r}\cos{\tilde{\theta}})^2 + (r \sin{\theta} - \tilde{r}\sin{\tilde{\theta}})^2 } } \right) \tilde{r} h_r(\tilde{r}) d\tilde{\theta} d\tilde{r} \\
&= - \frac{1}{2 \pi} \int_0^\infty \int_0^{2 \pi} \left( \log{\sqrt{ r^2 + \tilde{r}^2 - 2 r \tilde{r}\cos{(\theta - \tilde{\theta})} }} \right) \tilde{r} h_r(\tilde{r}) d\tilde{\theta} d\tilde{r} \\
&= - \frac{1}{2 \pi} \int_0^\infty \int_0^{2 \pi} \left( \log{\sqrt{ r^2 + \tilde{r}^2 - 2 r \tilde{r}\cos{\tilde{\theta}} }} \right) \tilde{r} h_r(\tilde{r}) d\tilde{\theta} d\tilde{r}.
\end{align*}
Thus, $G_{h,r}$ is independent of $\theta$ and represented by
\begin{equation*}
G_{h,r}(r) = G_{h,r}(r, 0) = - \frac{1}{2 \pi} \int_{\mathbb{R}^2} \left( \log{\sqrt{(r - y_1)^2 + y_2^2}} \right) h(\boldsymbol{y}) d\boldsymbol{y}.
\end{equation*}
Its derivative is given by 
\begin{equation*}
G'_{h,r}(r) = - \frac{1}{2 \pi} \int_{\mathbb{R}^2} \frac{r - y_1}{(r - y_1)^2 + y_2^2} h(\boldsymbol{y}) d\boldsymbol{y},
\end{equation*}
and we easily find that $G'_{h,r}$ is uniformly bounded and $G'_{h,r}(0) = 0$ based on the calculation in Lemma~\ref{K_h-bdd}. Finally, we see the asymptotic behavior of $G'_{h,r}$. Note that
\begin{equation*}
r G'_{h,r}(r) = - \frac{1}{2 \pi} \int_{\mathbb{R}^2} h(\boldsymbol{y}) d\boldsymbol{y} + \frac{1}{2 \pi} \int_{\mathbb{R}^2} \frac{ - y_1(r - y_1) + y_2^2}{(r - y_1)^2 + y_2^2} h(\boldsymbol{y}) d\boldsymbol{y}.
\end{equation*}
Then, setting $\boldsymbol{x}_r = (r, 0)$, we obtain 
\begin{equation*}
\left| \int_{\mathbb{R}^2} \frac{- y_1(r - y_1) + y_2^2}{(r - y_1)^2 + y_2^2} h(\boldsymbol{y}) d\boldsymbol{y} \right| \leq \int_{\mathbb{R}^2} \frac{|\boldsymbol{y}|}{| \boldsymbol{x}_r - \boldsymbol{y} |} |h(\boldsymbol{y})| d\boldsymbol{y} \quad \rightarrow \quad 0,
\end{equation*}
as $r \rightarrow \infty$ in the same way as the proof in Lemma~\ref{K_h-decay}. Hence, we obtain (\ref{G'_h-lim}) and complete the proof.
\end{proof}

\begin{rem}
Under the condition of Theorem~\ref{convergence}, it follows from Lemma~\ref{K_eps-lem} that $G^\varepsilon = G \ast h^\varepsilon$ is radial, namely $G^\varepsilon(\boldsymbol{x}) = G_r^\varepsilon(|\boldsymbol{x}|)$ and 
\begin{equation}
\lim_{r \rightarrow \infty} r \frac{d G_r^1}{dr}(r) = - \frac{1}{2 \pi}\int_0^\infty k h_r(k)dk = - \frac{1}{2 \pi}. \label{G-lim}
\end{equation}
Note that the relation,
\begin{equation*}
G_r^\varepsilon(|\boldsymbol{x}|) = G_r^1\left( \frac{|\boldsymbol{x}|}{\varepsilon} \right) - \frac{1}{2 \pi}\log{\varepsilon}. 
\end{equation*}
Then, we find
\begin{equation*}
\boldsymbol{K}^\varepsilon(\boldsymbol{x}) = \nabla^\perp G_r^\varepsilon(|\boldsymbol{x}|)  =  \boldsymbol{K}(\boldsymbol{x}) \mathcal{G}\left( \frac{|\boldsymbol{x}|}{\varepsilon} \right) , \qquad \mathcal{G}(r) =  - 2 \pi r \frac{dG_r^1}{dr} (r).
\end{equation*}
Moreover, owing to (\ref{G-lim}) and $\frac{dG_r^1}{dr}(0) = 0$, we have
\begin{equation*}
\mathcal{G}(0) = 0, \qquad \lim_{r \rightarrow \infty} \mathcal{G}(r) = 1.
\end{equation*}
Hence, if $\boldsymbol{x} \neq 0$, then $\boldsymbol{K}^\varepsilon(\boldsymbol{x})$ converges pointwise to $\boldsymbol{K}(\boldsymbol{x})$.
\end{rem}

\begin{prop}
Let $h \in L^1(\mathbb{R}^2)$ be a radial function that satisfies $\chi_\alpha^{-} h \in L^\infty(\mathbb{R}^2)$, $\alpha\in[0,1)$, $\chi_1^{+} h \in L^1(\mathbb{R}^2) \cap L^\infty(\mathbb{R}^2)$ and $\int_0^\infty k h_r(k) dk = 1$. Then, we have
\begin{equation*}
\int_{\mathbb{R}^2} \left| \boldsymbol{K}^\varepsilon(\boldsymbol{x}) - \boldsymbol{K}(\boldsymbol{x}) \right| d\boldsymbol{x} \leq  \frac{\varepsilon}{2 \pi} \| \chi_1 h \|_{L^1}
\end{equation*}
\label{K_con}
\end{prop}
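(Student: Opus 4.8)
The plan is to reduce the planar $L^1$ integral to a one-dimensional radial integral, where the explicit form of $\mathcal{G}$ makes the estimate essentially exact. First I would invoke the remark preceding the proposition: under the present hypotheses (which are exactly those of Lemma~\ref{K_eps-lem}), $G^\varepsilon = G \ast h^\varepsilon$ is radial and $\boldsymbol{K}^\varepsilon(\boldsymbol{x}) = \boldsymbol{K}(\boldsymbol{x})\,\mathcal{G}(|\boldsymbol{x}|/\varepsilon)$ with $\mathcal{G}(r) = -2\pi r\,(G_r^1)'(r)$, $\mathcal{G}(0) = 0$ and $\mathcal{G}(r) \to 1$ as $r \to \infty$. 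Writing $\boldsymbol{K}^\varepsilon - \boldsymbol{K} = \boldsymbol{K}\,\big(\mathcal{G}(|\boldsymbol{x}|/\varepsilon) - 1\big)$ and using $|\boldsymbol{K}(\boldsymbol{x})| = 1/(2\pi|\boldsymbol{x}|)$, I would pass to polar coordinates; the factor $|\boldsymbol{x}|$ in the denominator of $\boldsymbol{K}$ cancels against the Jacobian, giving $\int_{\mathbb{R}^2} |\boldsymbol{K}^\varepsilon - \boldsymbol{K}|\,d\boldsymbol{x} = \int_0^\infty |\mathcal{G}(r/\varepsilon) - 1|\,dr$. The substitution $k = r/\varepsilon$ then extracts the parameter and leaves $\varepsilon \int_0^\infty |\mathcal{G}(k) - 1|\,dk$, so the whole problem is to bound this scalar integral by $\frac{1}{2\pi}\|\chi_1 h\|_{L^1}$.

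The key step is to identify $\mathcal{G}$ in closed form. Since $-\Delta G^1 = h^1$ with $h^1 = \frac{1}{2\pi}h$ radial, the radial Poisson equation reads $\frac{d}{dr}\big(r\,(G_r^1)'(r)\big) = -r\,h_r^1(r) = -\frac{r}{2\pi}h_r(r)$. Integrating from $0$, and using the boundary value $(G_r^1)'(0) = 0$ established in Lemma~\ref{K_eps-lem}, I obtain $r\,(G_r^1)'(r) = -\frac{1}{2\pi}\int_0^r k\,h_r(k)\,dk$, hence $\mathcal{G}(r) = \int_0^r k\,h_r(k)\,dk$. The normalization $\int_0^\infty k\,h_r(k)\,dk = 1$ turns this into $\mathcal{G}(r) - 1 = -\int_r^\infty k\,h_r(k)\,dk$, which gives the pointwise bound $|\mathcal{G}(r) - 1| \le \int_r^\infty k\,|h_r(k)|\,dk$.

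It then remains to integrate this bound in $r$ and exchange the order of integration, which is legitimate by Tonelli since the integrand is nonnegative: the region $\{0 \le r \le k\}$ contributes a factor $k$, so $\int_0^\infty |\mathcal{G}(k) - 1|\,dk \le \int_0^\infty\!\!\int_r^\infty k\,|h_r(k)|\,dk\,dr = \int_0^\infty k^2\,|h_r(k)|\,dk$. Finally I would recognize the right-hand side as $\frac{1}{2\pi}\|\chi_1 h\|_{L^1}$, since $\chi_1(\boldsymbol{x}) = |\boldsymbol{x}|$ and polar coordinates give $\|\chi_1 h\|_{L^1} = 2\pi \int_0^\infty k^2\,|h_r(k)|\,dk$; combining with the reduction above yields the claimed inequality.

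I do not anticipate a substantive obstacle: once $\mathcal{G}(r) = \int_0^r k\,h_r(k)\,dk$ is in hand the computation is exact up to one application of Tonelli. The only points requiring care are the vanishing of the radial integration constant, which is precisely $(G_r^1)'(0) = 0$ from Lemma~\ref{K_eps-lem}, and the finiteness of the right-hand side; the latter holds because $\chi_\alpha^- h \in L^\infty$ with $\alpha < 1$ makes $k^2|h_r(k)| \lesssim k^{2-\alpha}$ integrable near the origin while $\chi_1^+ h \in L^1$ controls the tail, so that $\|\chi_1 h\|_{L^1} < \infty$ and every manipulation above is justified.
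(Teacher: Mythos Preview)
Your proof is correct and follows essentially the same route as the paper. Both arguments reduce $\int_{\mathbb{R}^2}|\boldsymbol{K}^\varepsilon-\boldsymbol{K}|\,d\boldsymbol{x}$ to $\varepsilon\int_0^\infty |1-\mathcal{G}(r)|\,dr$ (the paper writes this as $\varepsilon\int_0^\infty|rG'_{h,r}(r)+1|\,dr$, which is the same quantity), then use the radial Poisson equation together with the boundary data from Lemma~\ref{K_eps-lem} to obtain $1-\mathcal{G}(r)=\int_r^\infty k\,h_r(k)\,dk$, and finish by collapsing the double integral to $\int_0^\infty k^2|h_r(k)|\,dk=\frac{1}{2\pi}\|\chi_1 h\|_{L^1}$; the only cosmetic differences are that the paper starts from the convolution and changes variables rather than citing the $\mathcal{G}$-representation from the remark, and that it evaluates the final double integral via integration by parts (checking the boundary term vanishes) where you invoke Tonelli directly.
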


\begin{proof}
We first note that $h \in L^1(\mathbb{R}^2)$ and $\chi_1^{+} h \in L^1(\mathbb{R}^2)$ yields $\chi_1 h \in L^1(\mathbb{R}^2)$. We have
\begin{align*}
\int_{\mathbb{R}^2} \left| \boldsymbol{K}^\varepsilon(\boldsymbol{x}) - \boldsymbol{K}(\boldsymbol{x}) \right| d\boldsymbol{x} &= \int_{\mathbb{R}^2} \left| \int_{\mathbb{R}^2} \boldsymbol{K}(\boldsymbol{x} - \boldsymbol{y}) \frac{1}{2\pi\varepsilon^2} h\left( \frac{ \boldsymbol{y} }{\varepsilon} \right) d\boldsymbol{y} - \boldsymbol{K}(\boldsymbol{x}) \right| d\boldsymbol{x} \\
&= \varepsilon \int_{\mathbb{R}^2} \left| \frac{1}{2\pi} \int_{\mathbb{R}^2} \boldsymbol{K}(\boldsymbol{x} - \boldsymbol{y}) h\left( \boldsymbol{y} \right) d\boldsymbol{y} - \boldsymbol{K}(\boldsymbol{x}) \right| d\boldsymbol{x} \\
&= \frac{\varepsilon}{2 \pi} \int_{\mathbb{R}^2} \left| \nabla^\perp \left( G_h(\boldsymbol{x}) + \log{\left| \boldsymbol{x} \right|}  \right) \right| d\boldsymbol{x}. 
\end{align*}
Owing to Lemma~\ref{K_eps-lem}, $G_h$ is a radial function, namely $G_h(\boldsymbol{x}) = G_{h,r}(|\boldsymbol{x}|)$. Then, we find
\begin{equation*}
\nabla^\perp \left( G_h(\boldsymbol{x}) + \log{\left| \boldsymbol{x} \right|}  \right)  =  \frac{\boldsymbol{x}^\perp}{|\boldsymbol{x}|} \left( G'_{h,r}(|\boldsymbol{x}|) + \frac{1}{|\boldsymbol{x}|} \right),
\end{equation*}
and thus 
\begin{align*}
\int_{\mathbb{R}^2} \left| \boldsymbol{K}^\varepsilon(\boldsymbol{x}) - \boldsymbol{K}(\boldsymbol{x}) \right| d\boldsymbol{x} &= \frac{\varepsilon}{2 \pi} \int_{\mathbb{R}^2} \left| G'_{h,r}(|\boldsymbol{x}|) + \frac{1}{|\boldsymbol{x}|} \right| d\boldsymbol{x} \\
&= \varepsilon \int_0^\infty \left| r G'_{h,r}(r) + 1 \right| dr.
\end{align*}
Since $G_h$ is a solution to the Poisson equation, 
\begin{equation*}
- \Delta G_h(\boldsymbol{x}) = - \frac{1}{r}\frac{d}{dr}\left( r G'_{h,r}(r) \right) = h_r(r), 
\end{equation*}
and (\ref{G'_h-lim}) gives $r G'_{h,r}(r) \rightarrow - 1$ as $r \rightarrow \infty$, we obtain 
\begin{equation*}
1 + r G'_{h,r}(r) = \int_r^\infty k h_r(k) dk.
\end{equation*}
Hence, it follows that
\begin{align*}
\int_0^\infty \left| r G'_{h,r}(r) + 1 \right| dr &\leq  \int_0^\infty \int_r^\infty k \left| h_r(k) \right| dk  dr \\
& = \left[ r \int_r^\infty k \left| h_r(k) \right| dk \right]_0^\infty + \int_0^\infty r^2 \left| h_r(r) \right| dr \\
& = \int_0^\infty r^2 \left| h_r(r) \right| dr \\
& = \frac{1}{2 \pi} \| \chi_1 h \|_{L^1}.
\end{align*}
Here, we have used the fact that $h \in L^1(\mathbb{R}^2)$ and $\chi_1 h \in L^1(\mathbb{R}^2)$ yield
\begin{equation*}
\left[ r \int_r^\infty k \left| h_r(k) \right| dk \right]_0^\infty = 0.
\end{equation*}
The proof of the proposition is now complete.
\end{proof}

\section{Proofs of main theorems}
\subsection{Global solvability in the space of Radon measure}
We prove Theorem~\ref{well-posed} by a classical iterative method. In this method, a uniform boundedness and a quasi-Lipschitz continuity of the velocity field play an essential role. Actually, the following lemma holds.

\begin{lem}
Consider the initial value problem for $\boldsymbol{x}(t) \in \mathbb{R}^n$, $t \in [0, \infty)$.
\begin{equation} 
\frac{d}{dt} \boldsymbol{x}(t) = \boldsymbol{b} (\boldsymbol{x}(t), t), \qquad \boldsymbol{x}(0) = \boldsymbol{x}_0,  \label{ode}
\end{equation}
where $\boldsymbol{b} \in C\left( \mathbb{R}^n \times [0,\infty); \mathbb{R}^n \right)$ is uniformly bounded and quasi-Lipschitz continuous in space, namely
\begin{equation*}
\left| \boldsymbol{b}(\boldsymbol{x}, t) - \boldsymbol{b}(\boldsymbol{x}', t) \right| \leq c\varphi\left( |\boldsymbol{x} - \boldsymbol{x}'| \right),
\end{equation*}
and $c$ is independent of $t$. Then, (\ref{ode}) has a global unique solution $\boldsymbol{x}(\cdot) \in C^1[0, \infty)$.
\label{ODE}
\end{lem}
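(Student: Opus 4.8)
The plan is to split the assertion into existence, global extension, and uniqueness, and to treat $\varphi$ as an Osgood modulus of continuity so that a comparison argument can replace the Gronwall estimate that is unavailable here. For existence I would use the classical Picard scheme attached to the integral form of (\ref{ode}): set $\boldsymbol{x}_0(t)\equiv\boldsymbol{x}_0$ and $\boldsymbol{x}_{n+1}(t)=\boldsymbol{x}_0+\int_0^t \boldsymbol{b}(\boldsymbol{x}_n(s),s)\,ds$. Writing $M:=\sup_{\boldsymbol{y},s}|\boldsymbol{b}(\boldsymbol{y},s)|<\infty$, every iterate obeys $|\boldsymbol{x}_n(t)-\boldsymbol{x}_n(t')|\le M|t-t'|$ and $|\boldsymbol{x}_n(t)|\le|\boldsymbol{x}_0|+MT$ on each interval $[0,T]$, so the family $\{\boldsymbol{x}_n\}$ is uniformly bounded and equicontinuous there.

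By the Arzel\`a--Ascoli theorem I would extract a subsequence converging uniformly on $[0,T]$ to some $\boldsymbol{x}\in C([0,T];\mathbb{R}^n)$. Since the iterates stay in a fixed compact set on which $\boldsymbol{b}$ is uniformly continuous, $\boldsymbol{b}(\boldsymbol{x}_n(s),s)\to\boldsymbol{b}(\boldsymbol{x}(s),s)$ uniformly in $s$, which lets me pass to the limit under the integral and conclude that $\boldsymbol{x}$ satisfies the integral equation on $[0,T]$. As $s\mapsto\boldsymbol{b}(\boldsymbol{x}(s),s)$ is then continuous, $\boldsymbol{x}\in C^1([0,T];\mathbb{R}^n)$ solves (\ref{ode}). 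Because the bound $M$ is the same on every $[0,T]$, the solution cannot blow up in finite time, so the construction reaches every $T$; once uniqueness is established the solutions on nested intervals coincide and assemble into a single $\boldsymbol{x}\in C^1[0,\infty)$.

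The heart of the matter, and the step I expect to be the main obstacle, is uniqueness, since the log-Lipschitz modulus forbids a direct Gronwall argument. Given two solutions $\boldsymbol{x},\boldsymbol{x}'$ with the same datum I would set $\rho(t)=|\boldsymbol{x}(t)-\boldsymbol{x}'(t)|$ and obtain from the quasi-Lipschitz bound that $\rho(t)\le c\int_0^t\varphi(\rho(s))\,ds=:m(t)$. One checks that $\varphi$ is nondecreasing on $[0,\infty)$ (indeed $\varphi'(r)=-\log r>0$ for $r<1$), so the majorant satisfies $m(0)=0$ and $m'(t)=c\varphi(\rho(t))\le c\varphi(m(t))$. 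The decisive structural fact is the Osgood condition
\begin{equation*}
\int_0^{r_0}\frac{dr}{\varphi(r)}=\int_0^{r_0}\frac{dr}{r\,(1-\log r)}=+\infty,
\end{equation*}
which follows from the substitution $u=1-\log r$. If $m$ were positive somewhere I would take $t^\ast=\sup\{t:\,m(t)=0\}$ and integrate $m'/\varphi(m)\le c$ over $(t^\ast,T]$; the left-hand side equals $\int_{m(t^\ast+\delta)}^{m(T)}dr/\varphi(r)$, which tends to $+\infty$ as $\delta\to0^+$ by the Osgood condition, while the right-hand side stays bounded by $cT$, a contradiction. Hence $m\equiv0$, so $\rho\equiv0$ and the solution is unique. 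The existence and global steps are routine once the a priori bound is in hand; the real care is in setting up this comparison so that the monotonicity of $\varphi$ and the divergence of $\int_0 dr/\varphi(r)$ are used correctly.
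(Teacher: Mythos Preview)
The paper does not prove this lemma at all; it simply cites \cite{Marchioro}. Your Osgood argument for uniqueness is correct and is exactly the substantive content one finds in that reference, so on the essential point you are aligned with the intended proof.

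There is, however, a small gap in your existence step. From the Picard scheme $\boldsymbol{x}_{n+1}=T\boldsymbol{x}_n$ you extract, via Arzel\`a--Ascoli, a subsequence $\boldsymbol{x}_{n_k}\to\boldsymbol{x}$ and then ``pass to the limit under the integral.'' What that limit actually gives you is $\boldsymbol{x}_{n_k+1}\to T\boldsymbol{x}$, and since $\{n_k+1\}$ need not be a subsequence of $\{n_k\}$ you have not yet shown $\boldsymbol{x}=T\boldsymbol{x}$. This is a genuine, if routine, hole: the Picard iterates plus a compactness extraction do not by themselves produce a fixed point. Two clean repairs are available. First, since $\boldsymbol{b}$ is continuous and uniformly bounded, Peano's theorem already gives local existence without any modulus of continuity, and your bound $M$ then globalizes it; the quasi-Lipschitz hypothesis is needed only for uniqueness. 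Second, and closer to both \cite{Marchioro} and to the iterative argument the paper itself runs in Section~\ref{proof1}, you can use the quasi-Lipschitz bound to show the \emph{full} Picard sequence is Cauchy: setting $\rho_N(t)=\sup_{n\ge N}|\boldsymbol{x}_n(t)-\boldsymbol{x}_{n-1}(t)|$ one gets $\rho_N(t)\le c\int_0^t\varphi(\rho_{N-1}(s))\,ds$, and the same Osgood comparison you use for uniqueness forces $\rho_N\to 0$ uniformly on short intervals. Either route closes the gap with no new ideas.
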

The proof of the lemma is given in \cite{Marchioro}. Our proof of Theorem~\ref{well-posed} is also based on that in \cite{Marchioro, Oliver}. We begin with introducing a sequence of approximate solutions for $n \in \mathbb{N}$, which are given by
\begin{align*}
q^0(\boldsymbol{x}, t) &= q_0(\boldsymbol{x}), \\ 
\boldsymbol{u}^n(\boldsymbol{x}, t)& = \left( \boldsymbol{K}_h \ast q^{n-1} \right)(\boldsymbol{x}, t),  \\ 
\partial_t\boldsymbol{\eta}^n(\boldsymbol{x},t) &= \boldsymbol{u}^n(\boldsymbol{\eta}^n(\boldsymbol{x},t), t),  \\ 
\boldsymbol{\eta}^n(\boldsymbol{x},0) &= \boldsymbol{x},  \\
q^n(\boldsymbol{x}, t) &= q_0(\boldsymbol{\eta}^n(\boldsymbol{x},-t) ).
\end{align*}
As the first step, we prove that $\boldsymbol{\eta}^n \in C^1([0,\infty) ; \mathscr{G})$ for $n \in \mathbb{N}$ by induction. For $n = 1$, it follows from (\ref{unif-bdd}) and Lemma~\ref{Kh-est} that $\| \boldsymbol{u}^1 \|_{L^\infty} \leq \| \boldsymbol{K}_h \|_{L^\infty} \| q_0 \|_{\mathcal{M}}$ and
\begin{align*}
\left| \boldsymbol{u}^1(\boldsymbol{x}) - \boldsymbol{u}^1(\boldsymbol{x}') \right| &\leq \sup_{\boldsymbol{y} \in \mathbb{R}^2} \left| \boldsymbol{K}_h(\boldsymbol{x} - \boldsymbol{y}) - \boldsymbol{K}_h(\boldsymbol{x}' - \boldsymbol{y}) \right| \| q_0 \|_{\mathcal{M}}  \\
& \leq c \varphi\left( |\boldsymbol{x} - \boldsymbol{x}'| \right) \| q_0 \|_{\mathcal{M}}.
\end{align*}
Thus, $\boldsymbol{u}^1$ is uniformly bounded on $\mathbb{R}^2 \times[0,\infty)$, quasi-Lipschitz in space and obviously continuous in time. Using Lemma~\ref{ODE}, we obtain $\boldsymbol{\eta}^1 \in C^1([0,\infty) ; \mathscr{G})$. For $\boldsymbol{\eta}^{n-1} \in C^1([0,\infty) ; \mathscr{G})$, considering $\| q^{n-1}(t) \|_{\mathcal{M}} = \| q_0 \|_{\mathcal{M}}$, we readily find that $\boldsymbol{u}^n$ is uniformly bounded and quasi-Lipschitz in space as well. Regarding the continuity in time, we have
\begin{align*}
|\boldsymbol{u}^n(\boldsymbol{x}, t) - \boldsymbol{u}^n(\boldsymbol{x}, t')| & \leq \sup_{\boldsymbol{y} \in \mathbb{R}^2} \left| \boldsymbol{K}_h(\boldsymbol{x} - \boldsymbol{\eta}^{n-1}(\boldsymbol{y},t)) - \boldsymbol{K}_h(\boldsymbol{x} - \boldsymbol{\eta}^{n-1}(\boldsymbol{y},t'))\right| \| q_0 \|_{\mathcal{M}} \\
& \leq c \sup_{\boldsymbol{y} \in \mathbb{R}^2} \varphi \left( \left| \boldsymbol{\eta}^{n-1}(\boldsymbol{y},t) - \boldsymbol{\eta}^{n-1}(\boldsymbol{y},t') \right| \right) \| q_0 \|_{\mathcal{M}} \\
& \leq c \sup_{\boldsymbol{y} \in \mathbb{R}^2} \sup_{s \in [t, t']} \varphi \left( \left| \partial_t\boldsymbol{\eta}^{n-1}(\boldsymbol{y},s) \right| |t - t'| \right) \| q_0 \|_{\mathcal{M}} \\
& \leq c \sup_{\boldsymbol{y} \in \mathbb{R}^2} \sup_{s \in [t, t']} \varphi \left( \left| \boldsymbol{u}^{n-1}(\boldsymbol{y},s) \right| |t - t'| \right) \| q_0 \|_{\mathcal{M}}. 
\end{align*}
Since $\boldsymbol{u}^{n}$ is uniformly bounded, a uniform continuity in time holds. Hence, it follows from Lemma~\ref{ODE} that $\boldsymbol{\eta}^n \in C^1([0,\infty) ; \mathscr{G})$.

Next, we show the existence of a limit flow $\overline{\boldsymbol{\eta}} \in C^1([0,\infty) ; \mathscr{G})$. The sequence $\left\{ \boldsymbol{\eta}^n \right\}_{n \in \mathbb{N}}$ is Cauchy in $C([0,T) ; \mathscr{G})$ for some $T > 0$. Indeed, for $n \geq N$, we have
\begin{align*}
&\left| \boldsymbol{u}^n \left( \boldsymbol{\eta}^n(\boldsymbol{x}) \right) - \boldsymbol{u}^{n-1} \left( \boldsymbol{\eta}^{n-1}(\boldsymbol{x}) \right) \right|  \\
&\leq \int_{\mathbb{R}^2} \left| \boldsymbol{K}_h \left( \boldsymbol{\eta}^n(\boldsymbol{x}) - \boldsymbol{\eta}^{n-1}(\boldsymbol{y}) \right) - \boldsymbol{K}_h \left( \boldsymbol{\eta}^{n-1}(\boldsymbol{x}) - \boldsymbol{\eta}^{n-1}(\boldsymbol{y}) \right) \right| |q_0(\boldsymbol{y})| d\boldsymbol{y} \\
& \quad + \int_{\mathbb{R}^2} \left| \boldsymbol{K}_h \left( \boldsymbol{\eta}^{n-1}(\boldsymbol{x}) - \boldsymbol{\eta}^{n-1}(\boldsymbol{y}) \right) - \boldsymbol{K}_h \left( \boldsymbol{\eta}^{n-1}(\boldsymbol{x}) - \boldsymbol{\eta}^{n-2}(\boldsymbol{y}) \right) \right| |q_0(\boldsymbol{y})| d\boldsymbol{y} \\
&\leq c \| q_0 \|_{\mathcal{M}} \varphi\left( \left| \boldsymbol{\eta}^n(\boldsymbol{x}) - \boldsymbol{\eta}^{n-1}(\boldsymbol{x}) \right| \right) + c \int_{\mathbb{R}^2} \varphi\left( \left| \boldsymbol{\eta}^{n-1}(\boldsymbol{y}) - \boldsymbol{\eta}^{n-2}(\boldsymbol{y}) \right| \right) | q_0(\boldsymbol{y}) | d\boldsymbol{y} \\
&\leq c \| q_0 \|_{\mathcal{M}} \varphi\left( \sup_{n \geq N-1} \sup_{\boldsymbol{x} \in \mathbb{R}^2} \left| \boldsymbol{\eta}^n(\boldsymbol{x}) - \boldsymbol{\eta}^{n-1}(\boldsymbol{x}) \right| \right),
\end{align*}
in which we drop the explicit time dependence. Defining 
\begin{equation*}
\rho^N(t) \equiv \sup_{n \geq N} \sup_{\boldsymbol{x} \in \mathbb{R}^2} \left| \boldsymbol{\eta}^n(\boldsymbol{x}, t) - \boldsymbol{\eta}^{n-1}(\boldsymbol{x}, t) \right|,
\end{equation*}
we obtain
\begin{align*}
\rho^N(t) &\leq \sup_{n \geq N} \sup_{\boldsymbol{x} \in \mathbb{R}^2}\int_0^t \left| \boldsymbol{u}^n \left( \boldsymbol{\eta}^n(\boldsymbol{x}, s),s  \right) - \boldsymbol{u}^{n-1} \left( \boldsymbol{\eta}^{n-1}(\boldsymbol{x}, s),s  \right) \right| ds \\  
&\leq c \| q_0 \|_{\mathcal{M}} \int_0^t \varphi\left( \rho^{N-1}(s) \right) ds.
\end{align*}
This implies $\rho^N \rightarrow 0$ uniformly on $[0, T]$ as $N \rightarrow + \infty$ for sufficiently small $T$. Since $T$ depends only on $\| q_0 \|_{\mathcal{M}}$, the convergence holds for arbitrary times and thus the limit flow $\overline{\boldsymbol{\eta}} \in C([0,\infty) ; \mathscr{G})$ exits. 

In turn, we see the convergences of the vorticity and the velocity. Let us define $\overline{q}$ and $\overline{\boldsymbol{u}}$ by
\begin{equation}
\overline{q}(\boldsymbol{x}, t) = q_0\left( \overline{\boldsymbol{\eta}}(\boldsymbol{x}, -t) \right), \qquad \overline{\boldsymbol{u}}(\boldsymbol{x}, t) = \int_{\mathbb{R}^2} \boldsymbol{K}_h(\boldsymbol{x} - \boldsymbol{y}) \overline{q}(\boldsymbol{y}, t) d\boldsymbol{y}. \label{limit_q-u}
\end{equation}
Then, we find that $q^n \rightharpoonup \overline{q}$ weakly in $\mathcal{M}(\mathbb{R}^2)$ and $\boldsymbol{u}^n \rightarrow \overline{\boldsymbol{u}}$ in $C(\mathbb{R}^2)$. Note that both converge uniformly on any finite time interval. Indeed, for any $\psi \in C_0^1(\mathbb{R}^2)$, it follows that
\begin{align*}
\left| \int_{\mathbb{R}^2} \left( q^n(\boldsymbol{x}, t) - \overline{q}(\boldsymbol{x}, t) \right) \psi(\boldsymbol{x}) d\boldsymbol{x} \right| &\leq \int_{\mathbb{R}^2} |q_0(\boldsymbol{x})| \left| \psi\left( \boldsymbol{\eta}^n(\boldsymbol{x}, t) \right) - \psi\left( \overline{\boldsymbol{\eta}}(\boldsymbol{x}, t) \right) \right| d\boldsymbol{x} \\
& \leq \| q_0 \|_{\mathcal{M}} \| \nabla \psi \|_{L^\infty} \sup_{\boldsymbol{x} \in \mathbb{R}^2} \left| \boldsymbol{\eta}^n(\boldsymbol{x}, t) - \overline{\boldsymbol{\eta}}(\boldsymbol{x}, t) \right|, 
\end{align*}
and  
\begin{align*}
\left| \boldsymbol{u}^n(\boldsymbol{x},t) - \overline{\boldsymbol{u}}(\boldsymbol{x},t) \right| & \leq c \| q_0 \|_{\mathcal{M}} \varphi\left( \sup_{\boldsymbol{y} \in \mathbb{R}^2} \left| \boldsymbol{\eta}^n(\boldsymbol{y}, t) - \overline{\boldsymbol{\eta}}(\boldsymbol{y}, t) \right| \right).
\end{align*}
Hence, the convergence of $\boldsymbol{\eta}^n$ and the density argument yield the above assertions. 

Finally, we show that $(\overline{\boldsymbol{\eta}}, \overline{q}, \overline{\boldsymbol{u}})$ is the solution to (\ref{h-eta}). Consider the integral form of (\ref{h-eta}), 
\begin{align*}
& \left| \overline{\boldsymbol{\eta}}(\boldsymbol{x}, t) - \boldsymbol{x} - \int_0^t \overline{\boldsymbol{u}}\left( \overline{\boldsymbol{\eta}}(\boldsymbol{x}, s), s \right) ds  \right| \\
& = \left| \overline{\boldsymbol{\eta}}(\boldsymbol{x}, t) - \boldsymbol{x} - \int_0^t \overline{\boldsymbol{u}}\left( \overline{\boldsymbol{\eta}}(\boldsymbol{x}, s), s \right) ds - \left( \boldsymbol{\eta}^n(\boldsymbol{x}, t) - \boldsymbol{x} - \int_0^t \boldsymbol{u}^n\left( \boldsymbol{\eta}^n(\boldsymbol{x}, s), s \right) ds \right)  \right| \\
& \leq \left| \boldsymbol{\eta}^n(\boldsymbol{x}, t) - \overline{\boldsymbol{\eta}}(\boldsymbol{x}, t) \right| + \int_0^t \left| \boldsymbol{u}^n\left( \boldsymbol{\eta}^n(\boldsymbol{x}, s), s \right) - \overline{\boldsymbol{u}}\left( \boldsymbol{\eta}^n(\boldsymbol{x}, s), s \right) \right| ds \\
& \hspace{40mm} + \int_0^t \left| \overline{\boldsymbol{u}}\left( \boldsymbol{\eta}^n(\boldsymbol{x}, s), s \right) - \overline{\boldsymbol{u}}\left( \overline{\boldsymbol{\eta}}(\boldsymbol{x}, s), s \right) \right| ds \\
& \leq \sup_{\boldsymbol{x} \in \mathbb{R}^2} \left| \boldsymbol{\eta}^n(\boldsymbol{x}, t) - \overline{\boldsymbol{\eta}}(\boldsymbol{x}, t) \right| + c \| q_0 \|_{\mathcal{M}} \int_0^t \varphi\left( \sup_{\boldsymbol{y} \in \mathbb{R}^2} \left| \boldsymbol{\eta}^{n-1}(\boldsymbol{y}, s) - \overline{\boldsymbol{\eta}}(\boldsymbol{y}, s) \right| \right) ds \\
& \hspace{48mm} + c \| q_0 \|_{\mathcal{M}} \int_0^t \varphi\left( \sup_{\boldsymbol{x} \in \mathbb{R}^2} \left| \boldsymbol{\eta}^n(\boldsymbol{x}, s) - \overline{\boldsymbol{\eta}}(\boldsymbol{x}, s) \right| \right) ds.
\end{align*}
Then, the right side converges zero uniformly in space as $n \rightarrow + \infty$. Since $\overline{\boldsymbol{u}}\left( \overline{\boldsymbol{\eta}}(\boldsymbol{x}, t), t \right)$ is continuous in time, $\overline{\boldsymbol{\eta}}$ satisfies (\ref{h-eta}) and $\partial_t \overline{\boldsymbol{\eta}}$ is continuous in time. Considering the time reversibility of (\ref{h-eta}), we find that the solutions are extended to negative times. Moreover, we can see that the solutions $\overline{q}$ and $\overline{\boldsymbol{u}}$ satisfy
\begin{equation*}
\int_{\mathbb{R}} \int_{\mathbb{R}^2} \left( \partial_t \psi + \overline{\boldsymbol{u}} \cdot \nabla \psi \right) \overline{q} d\boldsymbol{x} dt = 0,
\end{equation*}
for any $\psi \in C_0^\infty(\mathbb{R}\times\mathbb{R}^2)$. The uniqueness of the solutions can be proven based on the same estimates in the existence part.

\label{proof1}

\subsection{Convergence to the Euler equations}
In this section, we prove Theorem~\ref{convergence}. The proof proceeds in the same way as \cite{Oliver}. Before that, let us review the following quasi-Lipschitz estimates for the singular kernel $\boldsymbol{K}$.
\begin{lem}
For $\omega \in L^1(\mathbb{R}^2) \cap L^\infty(\mathbb{R}^2)$, 
\begin{equation*}
\int_{\mathbb{R}^2} \left| \boldsymbol{K}(\boldsymbol{x} - \boldsymbol{y}) - \boldsymbol{K}(\boldsymbol{x}' - \boldsymbol{y}) \right| \left| \omega(\boldsymbol{y}) \right| d\boldsymbol{y} \leq c \varphi\left( \left| \boldsymbol{x} - \boldsymbol{x}' \right| \right) \left( \|\omega\|_{L^1} + \|\omega\|_{L^\infty} \right).
\end{equation*}
\label{K-esti1}
\end{lem}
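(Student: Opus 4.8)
The plan is to run the classical Marchioro--Pulvirenti quasi-Lipschitz argument, which is structurally identical to the proof of Lemma~\ref{Kh-est} but now with the singular weight $|\omega|$ playing the role previously played by $|h|$. Set $r = |\boldsymbol{x} - \boldsymbol{x}'|$. First I would dispose of the case $r \geq 1$, where $\varphi(r) = 1$: there I simply bound each kernel separately, splitting $\mathbb{R}^2$ at $|\boldsymbol{x}-\boldsymbol{y}| = 1$ and using $|\boldsymbol{K}(\boldsymbol{z})| = (2\pi|\boldsymbol{z}|)^{-1}$ together with $\int_{|\boldsymbol{z}|<1}|\boldsymbol{z}|^{-1}\,d\boldsymbol{z} = 2\pi$ on the inner part (paired with $\|\omega\|_{L^\infty}$) and $|\boldsymbol{z}|^{-1}\leq 1$ on the outer part (paired with $\|\omega\|_{L^1}$); the triangle inequality then gives the bound $c(\|\omega\|_{L^1}+\|\omega\|_{L^\infty})$. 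So it suffices to treat small $r>0$, and I decompose the integral over the three regions $\{|\boldsymbol{x}-\boldsymbol{y}|<2r\}$, $\{2r\leq|\boldsymbol{x}-\boldsymbol{y}|<1\}$, and $\{|\boldsymbol{x}-\boldsymbol{y}|\geq 1\}$.

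On the near region the kernel difference cannot be controlled by its gradient, so I estimate the two terms individually, using that $|\boldsymbol{x}-\boldsymbol{y}|<2r$ forces $|\boldsymbol{x}'-\boldsymbol{y}|<3r$. Pulling out $\|\omega\|_{L^\infty}$ and using the local integrability of $|\boldsymbol{K}|$ over a disc of radius $cr$ yields a contribution bounded by $c\,r\,\|\omega\|_{L^\infty}$, and since $r\leq\varphi(r)$ for $r<1$ this is $\leq c\,\varphi(r)\,\|\omega\|_{L^\infty}$. On the two remaining regions I apply the mean value theorem to $\boldsymbol{K}$, together with the elementary bound $|\nabla\boldsymbol{K}(\boldsymbol{z})|\leq c|\boldsymbol{z}|^{-2}$ and the segment lower bound $|\boldsymbol{x}-\boldsymbol{y}+\tau(\boldsymbol{x}'-\boldsymbol{x})|\geq\tfrac12|\boldsymbol{x}-\boldsymbol{y}|$ from (\ref{x-y}), which is available precisely because $|\boldsymbol{x}-\boldsymbol{y}|\geq 2r$ there. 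This gives the pointwise estimate
\begin{equation*}
\left|\boldsymbol{K}(\boldsymbol{x}-\boldsymbol{y})-\boldsymbol{K}(\boldsymbol{x}'-\boldsymbol{y})\right|\leq \frac{c\,r}{|\boldsymbol{x}-\boldsymbol{y}|^{2}}.
\end{equation*}

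In the far region $|\boldsymbol{x}-\boldsymbol{y}|^{-2}\leq 1$, so integrating against $|\omega|$ produces $c\,r\,\|\omega\|_{L^1}\leq c\,\varphi(r)\,\|\omega\|_{L^1}$. The intermediate region is where the only genuine subtlety lies: bounding $|\omega|$ by $\|\omega\|_{L^\infty}$ and passing to polar coordinates leaves $\int_{2r}^{1}\rho^{-1}\,d\rho = -\log(2r)$, so the contribution is of size $c\,r(-\log(2r))\,\|\omega\|_{L^\infty}$. This is exactly the logarithmic blow-up that forces the quasi-Lipschitz modulus $\varphi(r)=r(1-\log r)$ rather than a Lipschitz one; checking $r(-\log 2r)\leq c\,\varphi(r)$ for small $r$ closes this term. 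Summing the three contributions gives the asserted estimate $c\,\varphi(r)(\|\omega\|_{L^1}+\|\omega\|_{L^\infty})$. The main obstacle is therefore not any single estimate but the bookkeeping of the cutoffs: the radius $2r$ must be used both to guarantee local integrability in the near region and to validate the segment bound, while the cutoff at $1$ separates the logarithmically divergent $L^\infty$ regime from the $L^1$-summable tail.
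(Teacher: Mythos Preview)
Your proposal is correct and follows exactly the classical Marchioro--Pulvirenti argument that the paper invokes: the paper itself gives no proof of this lemma, stating only that it is ``straightforward'' and citing \cite{Benedetto, Marchioro, McGrath}. Your three-region decomposition (near, intermediate with logarithmic divergence, far $L^1$-tail) together with the mean-value/segment bound is precisely that standard proof.
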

The proof of this lemma is straightforward and we find it, for example, in \cite{Benedetto, Marchioro, McGrath}. The following lemma is a modified estimate of the singular kernel, which was proven in \cite{Benedetto, Oliver}.
\begin{lem}
Let $\omega \in L^1(\mathbb{R}^2) \cap L^\infty(\mathbb{R}^2)$ and $\phi$ be an area preserving measurable transformation on $\mathbb{R}^2$. Then 
\begin{equation*}
\int_{\mathbb{R}^2} \left| \boldsymbol{K}(\boldsymbol{x} - \boldsymbol{y}) - \boldsymbol{K}(\boldsymbol{x} - \phi( \boldsymbol{y} )) \right| \left| \omega(\boldsymbol{y}) \right| d\boldsymbol{y} \leq c \sup_{x \in \mathbb{R}^2} \varphi\left( \left| \boldsymbol{x} - \phi(\boldsymbol{x}) \right| \right) \left( \|\omega\|_{L^1} + \|\omega\|_{L^\infty} \right).
\end{equation*}
\label{K-esti2}
\end{lem}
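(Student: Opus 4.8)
The plan is to prove the estimate pointwise in $\boldsymbol{x}$, the supremum on the right-hand side entering only through a \emph{uniform} control of the displacement $\delta(\boldsymbol{y}) := |\boldsymbol{y} - \phi(\boldsymbol{y})|$. Write $M := \sup_{\boldsymbol{x}\in\mathbb{R}^2}\varphi(|\boldsymbol{x}-\phi(\boldsymbol{x})|)$. Since $\varphi \le 1$ everywhere, $M \in [0,1]$; if $M = 1$ the claim is immediate from the triangle inequality $|\boldsymbol{K}(\boldsymbol{x}-\boldsymbol{y})-\boldsymbol{K}(\boldsymbol{x}-\phi(\boldsymbol{y}))| \le |\boldsymbol{K}(\boldsymbol{x}-\boldsymbol{y})| + |\boldsymbol{K}(\boldsymbol{x}-\phi(\boldsymbol{y}))|$ together with the standard bound $\int |\boldsymbol{K}(\boldsymbol{x}-\boldsymbol{y})||\omega(\boldsymbol{y})|\,d\boldsymbol{y} \lesssim \|\omega\|_{L^1}+\|\omega\|_{L^\infty}$ (the shifted term being handled by area preservation as below). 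So I may assume $M < 1$ and set $R := \varphi^{-1}(M) < 1$; since $\varphi$ is increasing, $\varphi(\delta(\boldsymbol{y})) \le M$ forces $\delta(\boldsymbol{y}) \le R$ for every $\boldsymbol{y}$. Fixing $\boldsymbol{x}$, I split the integral over $\boldsymbol{y}$ into a near region $\{|\boldsymbol{x}-\boldsymbol{y}| < 2R\}$ and a far region $\{|\boldsymbol{x}-\boldsymbol{y}| \ge 2R\}$.

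On the near region I use the triangle inequality to separate the two kernels. The term with $\boldsymbol{K}(\boldsymbol{x}-\boldsymbol{y})$ is bounded, after $|\omega|\le\|\omega\|_{L^\infty}$, by $\frac{\|\omega\|_{L^\infty}}{2\pi}\int_{|\boldsymbol{x}-\boldsymbol{y}|<2R}|\boldsymbol{x}-\boldsymbol{y}|^{-1}\,d\boldsymbol{y} = 2R\|\omega\|_{L^\infty}$, the singularity being locally integrable. The term with $\boldsymbol{K}(\boldsymbol{x}-\phi(\boldsymbol{y}))$ is the genuinely new point: a direct change of variables is blocked by the weight $|\omega(\boldsymbol{y})|$, so I first bound $|\omega(\boldsymbol{y})| \le \|\omega\|_{L^\infty}$ and then observe that, $\phi$ being area preserving, $\boldsymbol{1}_{\{|\boldsymbol{x}-\boldsymbol{y}|<2R\}}\,d\boldsymbol{y}$ pushes forward under $\phi$ to a measure dominated by Lebesgue measure with total mass $4\pi R^2$. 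By the bathtub (rearrangement) principle, the integral of the radially decreasing function $|\boldsymbol{K}(\boldsymbol{x}-\cdot)|$ against such a measure is largest on the centred disk of that mass, so $\int_{|\boldsymbol{x}-\boldsymbol{y}|<2R}|\boldsymbol{K}(\boldsymbol{x}-\phi(\boldsymbol{y}))|\,d\boldsymbol{y} \le \int_{|\boldsymbol{x}-\boldsymbol{z}|<2R}|\boldsymbol{K}(\boldsymbol{x}-\boldsymbol{z})|\,d\boldsymbol{z} = 2R$. Both contributions are therefore $\lesssim R\|\omega\|_{L^\infty}$, and since $R \le R(1-\log R) = \varphi(R) = M$ for $R<1$, the near region is controlled by $c\,M\|\omega\|_{L^\infty}$.

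On the far region the displacement is small relative to the separation, $\delta(\boldsymbol{y}) \le R \le \tfrac12|\boldsymbol{x}-\boldsymbol{y}|$, so the whole segment joining $\boldsymbol{x}-\boldsymbol{y}$ to $\boldsymbol{x}-\phi(\boldsymbol{y})$ stays at distance $\ge \tfrac12|\boldsymbol{x}-\boldsymbol{y}|$ from the origin. The mean value theorem with $|\nabla\boldsymbol{K}(\boldsymbol{z})| \le c|\boldsymbol{z}|^{-2}$ then gives $|\boldsymbol{K}(\boldsymbol{x}-\boldsymbol{y})-\boldsymbol{K}(\boldsymbol{x}-\phi(\boldsymbol{y}))| \le c\,\delta(\boldsymbol{y})\,|\boldsymbol{x}-\boldsymbol{y}|^{-2} \le c\,R\,|\boldsymbol{x}-\boldsymbol{y}|^{-2}$. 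I split the remaining integral at $|\boldsymbol{x}-\boldsymbol{y}| = 1$: on $\{|\boldsymbol{x}-\boldsymbol{y}|\ge 1\}$ the factor $|\boldsymbol{x}-\boldsymbol{y}|^{-2}\le 1$ yields $cR\|\omega\|_{L^1}$, while on $\{2R\le|\boldsymbol{x}-\boldsymbol{y}|<1\}$ I use $|\omega|\le\|\omega\|_{L^\infty}$ and $\int_{2R}^1 \rho^{-1}\,d\rho = \log\frac{1}{2R}$, producing $cR\log\frac{1}{2R}\,\|\omega\|_{L^\infty}$. Because $R\log\frac{1}{2R} \le R(1-\log R) = \varphi(R) = M$ for $R<1$ (and the intermediate shell is empty when $2R\ge 1$), the far region is likewise bounded by $c\,M(\|\omega\|_{L^1}+\|\omega\|_{L^\infty})$.

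Adding the two regions gives the claim with an absolute constant $c$, since $M = \sup_{\boldsymbol{x}}\varphi(|\boldsymbol{x}-\phi(\boldsymbol{x})|)$. The only real obstacle is the near-origin singularity of the shifted kernel $\boldsymbol{K}(\boldsymbol{x}-\phi(\boldsymbol{y}))$: a direct change of variables is blocked by the weight $|\omega(\boldsymbol{y})|$, and the resolution is to trade that weight for $\|\omega\|_{L^\infty}$ and then exploit area preservation through the rearrangement argument above. Everything else parallels the proof of the standard quasi-Lipschitz estimate in Lemma~\ref{K-esti1}.
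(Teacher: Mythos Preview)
Your argument is correct. The paper does not prove this lemma at all---it merely cites \cite{Benedetto, Oliver}---so there is no in-paper proof to compare against, but your near/far decomposition at scale $R=\varphi^{-1}(M)$ together with the mean-value estimate on the far region is exactly the standard route taken in those references.

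One minor remark: the rearrangement (bathtub) step you use for the shifted kernel on the near region is valid, but it is more machinery than needed. Since $\delta(\boldsymbol{y})\le R$ and $|\boldsymbol{x}-\boldsymbol{y}|<2R$ force $|\boldsymbol{x}-\phi(\boldsymbol{y})|<3R$, one has $\{|\boldsymbol{x}-\boldsymbol{y}|<2R\}\subset\{|\boldsymbol{x}-\phi(\boldsymbol{y})|<3R\}$, and then area preservation gives directly
\[
\int_{|\boldsymbol{x}-\boldsymbol{y}|<2R}\!|\boldsymbol{K}(\boldsymbol{x}-\phi(\boldsymbol{y}))|\,d\boldsymbol{y}
\;\le\;
\int_{|\boldsymbol{x}-\phi(\boldsymbol{y})|<3R}\!|\boldsymbol{K}(\boldsymbol{x}-\phi(\boldsymbol{y}))|\,d\boldsymbol{y}
\;=\;
\int_{|\boldsymbol{x}-\boldsymbol{z}|<3R}\!|\boldsymbol{K}(\boldsymbol{x}-\boldsymbol{z})|\,d\boldsymbol{z}
\;=\;3R,
\]
which is the form the cited proofs use. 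Your sharper constant $2R$ is harmless either way.
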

We estimate the difference of the flow maps.
\begin{align*}
&\left| \boldsymbol{\eta}^\varepsilon(\boldsymbol{x}, t) - \boldsymbol{\eta}(\boldsymbol{x}, t) \right| \leq \int_0^t \left| \boldsymbol{u}^\varepsilon\left( \boldsymbol{\eta}^\varepsilon(\boldsymbol{x}, s), s \right) - \boldsymbol{u}\left( \boldsymbol{\eta}(\boldsymbol{x}, s), s \right) \right| ds  \\
&\leq \int_0^t \int_{\mathbb{R}^2} \left| \boldsymbol{K}^\varepsilon\left( \boldsymbol{\eta}^\varepsilon(\boldsymbol{x}, s) - \boldsymbol{\eta}^\varepsilon(\boldsymbol{y}, s) \right) - \boldsymbol{K} \left( \boldsymbol{\eta}^\varepsilon(\boldsymbol{x}, s) - \boldsymbol{\eta}^\varepsilon(\boldsymbol{y}, s) \right) \right| \left| \omega_0(\boldsymbol{y}) \right| d\boldsymbol{y} ds \\
&\quad + \int_0^t \int_{\mathbb{R}^2} \left| \boldsymbol{K}\left( \boldsymbol{\eta}^\varepsilon(\boldsymbol{x}, s) - \boldsymbol{\eta}^\varepsilon(\boldsymbol{y}, s) \right) - \boldsymbol{K} \left( \boldsymbol{\eta}(\boldsymbol{x}, s) - \boldsymbol{\eta}^\varepsilon(\boldsymbol{y}, s) \right) \right| \left| \omega_0(\boldsymbol{y}) \right| d\boldsymbol{y} ds \\
&\quad + \int_0^t \int_{\mathbb{R}^2} \left| \boldsymbol{K}\left( \boldsymbol{\eta}(\boldsymbol{x}, s) - \boldsymbol{\eta}^\varepsilon(\boldsymbol{y}, s) \right) - \boldsymbol{K} \left( \boldsymbol{\eta}(\boldsymbol{x}, s) - \boldsymbol{\eta}(\boldsymbol{y}, s) \right) \right| \left| \omega_0(\boldsymbol{y}) \right|  d\boldsymbol{y} ds \\
&\equiv \int_0^t J_1 + J_2 + J_3 ds.
\end{align*}
As for $J_1$, it follows from Lemma~\ref{K_con} that
\begin{equation*}
J_1 \leq \| \omega_0 \|_{L^\infty} \int_{\mathbb{R}^2} \left| \boldsymbol{K}^\varepsilon(\boldsymbol{y}) - \boldsymbol{K}(\boldsymbol{y}) \right| d\boldsymbol{y} \leq \frac{\varepsilon}{2 \pi} \| \omega_0 \|_{L^\infty} \| \chi_1 h \|_{L^1} .
\end{equation*}
The other two terms are estimated by using Lemma~\ref{K-esti1} and Lemma~\ref{K-esti2} respectively. We find
\begin{align*}
J_2 &= \int_{\mathbb{R}^2}  \left| \boldsymbol{K}\left( \boldsymbol{\eta}^\varepsilon(\boldsymbol{x}, s) - \boldsymbol{y} \right) - \boldsymbol{K} \left( \boldsymbol{\eta}(\boldsymbol{x}, s) - \boldsymbol{y} \right)  \right| \left| q(\boldsymbol{y}, s) \right|  d\boldsymbol{y} \\
&\leq c_2 \varphi \left( \left| \boldsymbol{\eta}^\varepsilon(\boldsymbol{x}, s) - \boldsymbol{\eta}(\boldsymbol{x}, s) \right| \right) \left( \| \omega_0 \|_{L^1} + \| \omega_0 \|_{L^\infty} \right),
\end{align*}
and
\begin{align*}
J_3 &= \int_{\mathbb{R}^2} \left| \boldsymbol{K}\left( \boldsymbol{\eta}(\boldsymbol{x}, s) - \boldsymbol{\eta}^\varepsilon \left( \boldsymbol{\eta}^{-1}(\boldsymbol{y}, s), s \right) \right) - \boldsymbol{K} \left( \boldsymbol{\eta}(\boldsymbol{x}, s) - \boldsymbol{y} \right) \right| \left| \omega(\boldsymbol{y}, s) \right|  d\boldsymbol{y} \\
&\leq c_3 \sup_{\boldsymbol{x}\in\mathbb{R}^2} \varphi \left( \left| \boldsymbol{\eta}^\varepsilon(\boldsymbol{x}, s)  - \boldsymbol{\eta}(\boldsymbol{x}, s)  \right| \right) \left( \| \omega_0 \|_{L^1} + \| \omega_0 \|_{L^\infty} \right).
\end{align*}
Combining above estimates and setting $\rho(t) = \sup_{\boldsymbol{x}\in\mathbb{R}^2} \left| \boldsymbol{\eta}^\varepsilon(\boldsymbol{x}, t)  - \boldsymbol{\eta}(\boldsymbol{x}, t) \right|$, we obtain
\begin{equation*}
\rho(t) \leq c \int_0^t \left[ \varepsilon + \varphi\left( \rho(s) \right) \right] ds.
\end{equation*}
This yields $\rho(t) \leq C(T) \varepsilon^{\exp{(-T)}}$ for $t\in [0, T]$, see the calculation in \cite{Oliver}. The proof is now complete.

\label{proof2}


\end{document}